\theoremstyle{plain}
\newtheorem{theorem}{Theorem}[section]
\newtheorem{lemma}[theorem]{Lemma}
\newtheorem{proposition}[theorem]{Proposition}
\theoremstyle{definition}
\newtheorem{definition}[theorem]{Definition}
\newtheorem{remark}[theorem]{Remark}
\newtheorem{question}[theorem]{Question}
\newtheorem{conjecture}[theorem]{Conjecture}
\newtheorem*{notation*}{Notation}
\newcommand{\s}{\mathfrak{s}}
\newcommand{\tspin}{\mathfrak{t}}
\DeclareMathOperator{\hf}{HF}
\DeclareMathOperator{\HF}{HF}
\DeclareMathOperator{\cfk}{CFK}
\DeclareMathOperator{\cf}{CF}
\DeclareMathOperator{\CF}{CF}
\DeclareMathOperator{\hfkhat}{\widehat{HFK}}
\DeclareMathOperator{\hfhat}{\widehat{HF}}
\newcommand\blfootnote[1]{%
  \begingroup
  \renewcommand\thefootnote{}\footnote{#1}%
  \addtocounter{footnote}{-1}%
  \endgroup
}
\begin{document}
\title{Truncated Heegaard Floer Homology and Knot Concordance Invariants}
\author{Linh Truong}
\address{Department of Mathematics, Columbia University, New York, NY 10027}
\email{ltruong@math.columbia.edu}
\maketitle
\begin{abstract} 
In this paper we construct a sequence of integer-valued concordance invariants $\nu_n(K)$ that generalize the Ozsv\'ath-Szab\'o $\nu$-invariant and the Hom-Wu $\nu^+$-invariant. 
\end{abstract}

\blfootnote{This work was partially supported by an NSF postdoctoral fellowship, DMS-1606451.}

\section{Introduction}
Ozsv\'ath and Szab\'o defined an invariant $\nu(K)$ in \cite{OzSzRational} using maps on Heegaard Floer homology $\hfhat$ induced by the two-handle cobordism corresponding to integral surgery along $K$. Hom and Wu \cite{Hom-Wu} defined $\nu^+(K)$ using maps on $\hf^+$ and showed that $\nu^+(K)$ produces arbitrarily better four-ball genus bounds than $\nu(K)$.

Motivated by the constructions of $\nu(K)$ and $\nu^+(K)$, we construct a sequence of knot invariants $\nu_n(K)$, $n \in \mathbb{Z}$, with the following properties:
\begin{enumerate}
\item[$\bullet$] $\nu_n(K)$ is a concordance invariant. 
\item[$\bullet$] $\nu_1(K) = \nu(K)$. 
\item[$\bullet$] $\nu_n(K) \leq \nu_{n+1}(K)$.
\item[$\bullet$] For sufficiently large $n$, $\nu_n(K) = \nu^+(K)$. 
\item[$\bullet$] $\nu_n(-K) = - \nu_{-n}(K)$, where $-K$ is the mirror of $K$. 
\item[$\bullet$] $\nu_n(K) \leq g_4(K)$.
\end{enumerate}

The invariants $\nu_n(K)$ are constructed from maps on truncated Heegaard Floer homology $\hf^n$. By an extension of the large integer surgery formulas to truncated Floer homology (see Propositions \ref{negsurgeryformula} and \ref{possurgeryformula}) the invariants $\nu_n(K)$ can be computed from the $\mathbb{Z}\oplus \mathbb{Z}$-filtered knot Floer chain complex $\cfk^\infty(K)$. 

Homologically thin knots are a special class of knots whose knot Floer homology lies in a single $\delta = A - M$ grading, where $A$ is the Alexander grading and $M$ is the Maslov grading. We prove that $\nu_n(K)$ of thin knots only depends on $\tau(K)$. 
\begin{theorem} \label{thinknots}
Let $K$ be a homologically thin knot with $\tau(K) = \tau$.  
\begin{enumerate}
\item[(i)] If $\tau = 0$, $\nu_n(K) = 0$ for all $n$.
\item[(ii)] 
If $\tau > 0$, 
  \[
    \nu_n(K) = \begin{cases}
        0, & \text{for } n \leq -(\tau+1)/2,\\
        \tau + 2n + 1, & \text{for } -\tau/2 \leq n \leq -1,\\
        \tau, & \text{for } n \geq 0.
        \end{cases}
  \]
\item[(iii)] If $\tau < 0$, 
  \[
    \nu_n(K) =  \begin{cases}
        \tau, & \text{for } n \leq 0,\\
        \tau + 2n - 1, & \text{for } 1 \leq n \leq -\tau/2,\\
        0, & \text{for } n \geq (-\tau+1)/2.
        \end{cases}
  \]
\end{enumerate}
\end{theorem}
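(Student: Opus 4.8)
The plan is to invoke the structure theorem for the knot Floer complex of a homologically thin knot and then reduce the computation of each $\nu_n(K)$ to an explicit calculation on a model complex. Recall that, by the work of Petkova (extending Ozsv\'ath--Szab\'o and Rasmussen), if $K$ is homologically thin with $\tau(K) = \tau$, then $\cfk^\infty(K)$ is filtered chain homotopy equivalent to a direct sum $\mathrm{St}(\tau) \oplus \bigoplus_i B_i$, where $\mathrm{St}(\tau)$ is the staircase complex determined by $\tau$ and each $B_i$ is a ``box'' (a square complex with two horizontal and two vertical arrows). Since Propositions \ref{negsurgeryformula} and \ref{possurgeryformula} compute $\nu_n(K)$ from $\cfk^\infty(K)$, and since $\nu_n$ is a filtered chain homotopy invariant, it suffices to analyze the staircase and box summands separately.

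First I would show that the box summands contribute nothing. Each $B_i$ is acyclic, and the subquotient complexes $A_s^n$ and truncated maps $v_s^n, h_s^n$ extracted from it carry no homology onto the ``tower'' that defines $\nu_n$; concretely, the generator of $\hf^n$ controlling $\nu_n$ lives entirely in the staircase summand, so the $B_i$ may be discarded. I expect this to be a short diagram chase once the truncated surgery formula is in place, using that the paired horizontal and vertical differentials of a box kill its homology inside each $A_s^n$.

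The heart of the argument is the direct computation on $\mathrm{St}(\tau)$. By the symmetry $\nu_n(-K) = -\nu_{-n}(K)$, together with the fact that the mirror of a thin knot is thin with $\tau(-K) = -\tau(K)$, case (iii) follows immediately from case (ii); and case (i), where $\mathrm{St}(0)$ is a single generator (the unknot complex), gives $\nu_n = 0$ for all $n$. So I would focus on $\tau > 0$. Here $\mathrm{St}(\tau)$ is the zig-zag on $2\tau+1$ generators whose unique surviving homology class sits at an Alexander grading governed by $\tau$. For each $n$ and each $s$ I would identify the truncated complex $A_s^n$, compute the induced map $v_s^n$ on homology, and determine the threshold value of $s$ at which $v_s^n$ changes behavior; this threshold is $\nu_n(K)$. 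Tracking the Alexander and Maslov gradings through the truncation produces the three regimes: for $n \ge 0$ the truncation is deep enough that $v_s^n$ behaves as in the $\nu^+$ computation, giving threshold $\tau$; for $-\tau/2 \le n \le -1$ the truncation cuts the staircase partway, shifting the threshold to $\tau + 2n + 1$; and for $n \le -(\tau+1)/2$ the relevant part of the staircase is truncated away entirely, forcing $\nu_n = 0$.

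The main obstacle, I expect, is the middle regime $-\tau/2 \le n \le -1$, where the interaction between truncation depth and staircase length is delicate: one must see precisely how far up the staircase the truncation reaches and which generator of $\hf^n$ becomes the image of $v_s^n$, in order to extract the linear dependence $\tau + 2n + 1$. Verifying the boundary cases $n = -\tau/2$ and $n = -(\tau+1)/2$, where the formula transitions between regimes and where the parity of $\tau$ enters, will demand the most careful bookkeeping.
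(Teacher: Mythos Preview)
Your proposal is correct and follows essentially the same approach as the paper: reduce via Petkova's structure theorem to the staircase summand $\mathrm{St}(\tau)\cong\cfk^\infty(T_{2,2\tau\pm1})$, dispose of the acyclic box summands, use the mirroring relation $\nu_n(-K)=-\nu_{-n}(K)$ to reduce case~(iii) to case~(ii), and then carry out an explicit generator-by-generator analysis of which $z_p$ lie in $A^n_s$ to locate the injectivity/surjectivity threshold in each regime. The paper streamlines the two extreme regimes slightly by first computing $\nu^+(T_{2,2\tau+1})=\tau$ and $\nu^{+'}(T_{2,2\tau+1})=0$ and then invoking the sandwich $\nu^{+'}\le\nu_n\le\nu^+$ together with monotonicity, rather than checking every $n$ directly; you may find this shortcut useful when you write out the details.
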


The computation of $\nu_n(K)$ for thin knots illustrates that the gap between $\nu_n(K)$ and $\nu_{n+1}(K)$ can be more than one. 
In fact, the gap between $\nu_n(K)$ and $\nu_{n+1}(K)$ can be arbitrarily big.

\begin{theorem}
Let $T_{p, p+1}$ denote the $(p, p+1)$ torus knot. For $p > 3$,
\begin{eqnarray*}
\nu_{-1}(T_{p, p+1}) - \nu_{-2}(T_{p,p+1}) = p .
\end{eqnarray*}
\end{theorem}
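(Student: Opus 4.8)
The plan is to reduce the computation of $\nu_{-1}$ and $\nu_{-2}$ to the local $h$-invariants (torsion coefficients) $V_s$ and then to a combinatorial fact about the numerical semigroup of the torus knot. Since $T_{p,p+1}$ is an L-space knot, $\cfk^\infty(T_{p,p+1})$ is a staircase complex, so for every $s$ the homology $H_*(A_s^+)$ is a single tower $\mathcal{T}^+$ and the vertical map $v_s$ is modeled by multiplication by $U^{V_s}$, where $V_s \geq 0$ is non-increasing in $s$, with $V_s = 0$ for $s \geq g$ and $g = \tau(T_{p,p+1}) = p(p-1)/2$. First I would feed this staircase into the truncated negative surgery formula (Proposition \ref{negsurgeryformula}); for an L-space knot all the relevant maps are powers of $U$, so the formula collapses to an inequality on the $V_s$. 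Carrying this out I expect the identity
\[
\nu_{-m}(T_{p,p+1}) = \max\{\, s : V_s \geq m \,\}, \qquad m = 1, 2,
\]
valid in the range where the right-hand side is non-negative.

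Next I would compute the $V_s$ from the semigroup $S = \langle p, p+1 \rangle$. Write $F = 2g - 1 = p^2 - p - 1$ for the Frobenius number, which is the largest gap of $S$, and use the standard identification of the torsion coefficients with a gap count, $V_s = \#\{\, \gamma \notin S : \gamma \geq g + s \,\}$. From this one reads off that $V_s \geq 1$ precisely when $g + s \leq F$, i.e.\ when $s \leq g - 1$, so that $\nu_{-1} = g - 1$. Likewise, if $\gamma_2$ denotes the second-largest gap of $S$, then $V_s \geq 2$ exactly when $g + s \leq \gamma_2$, whence $\nu_{-2} = \gamma_2 - g$.

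Finally I would invoke the symmetry of the semigroup of a torus knot: the involution $\gamma \mapsto F - \gamma$ exchanges the gaps of $S$ with the elements of $S$ lying in $[0,F]$, matching the largest gap $F$ with $0$ and the second-largest gap $\gamma_2$ with the smallest positive element of $S$, which is $p$. Hence $F - \gamma_2 = p$, and therefore
\[
\nu_{-1}(T_{p,p+1}) - \nu_{-2}(T_{p,p+1}) = (g-1) - (\gamma_2 - g) = 2g - 1 - \gamma_2 = F - \gamma_2 = p.
\]

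The main obstacle is the first step: extracting the clean inequality $\nu_{-m} = \max\{ s : V_s \geq m \}$ from the truncated surgery formula and, in particular, verifying that no truncation of the value to $0$ occurs. This is exactly where the hypothesis $p > 3$ enters. One computes $\nu_{-2} = \gamma_2 - g = (p^2 - 3p - 2)/2$, which is non-negative if and only if $p \geq 4$; thus for $p > 3$ the invariant $\nu_{-2}$ is given by the interior formula rather than being floored at $0$, and the difference comes out to exactly $p$. Once this reduction and boundary check are in place, the remaining semigroup bookkeeping is routine.
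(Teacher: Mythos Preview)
Your proposal reaches the right numbers and the overall strategy is sound, but it takes a genuinely different route from the paper's proof.

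The paper works directly with Allen's staircase description $[1,\,p-1,\,2,\,p-2,\,\dots,\,p-1,\,1]$ for $\cfk^\infty(T_{p,p+1})$. It writes down explicit filtration coordinates for the generators $b_0,\dots,b_{2(p-1)}$, reads off $\varepsilon(T_{p,p+1})=1$ to get $\nu_{-1}=\tau-1$, and then checks by hand which generators lie in $A^{-2}_{\tau-p-1}$ versus $A^{-2}_{\tau-p}$: in the former $b_3\notin A^{-2}_{\tau-p-1}$ so $[b_0]$ survives, while in the latter $b_3$ is present and $b_4$ is not, killing $[b_0]$. This pins down $\nu_{-2}=\tau-p-1$ directly. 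Your approach instead passes through the torsion coefficients $V_s$ and the numerical semigroup $\langle p,p+1\rangle$, reducing the problem to the symmetry $\gamma\mapsto F-\gamma$ that pairs gaps with semigroup elements. This is more conceptual and would extend more readily to other torus knots or L-space knots, whereas the paper's argument is a concrete chain-level check tailored to this family.

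The one real gap in your outline is the identity $\nu_{-m}=\max\{s:V_s\ge m\}$. You are right that this is the crux, and it does hold for L-space knots (it matches the paper's values for $T_{2,9}$, $T_{4,5}$, and $T_{p,p+1}$), but Proposition~\ref{negsurgeryformula} by itself only identifies $A^{-n}_s$ with the truncated Floer group of the negative surgery; it says nothing about $V_s$, which is defined via the \emph{positive} surgery map $v_s^+$. Bridging the two requires an argument specific to staircases: essentially, that $[b_0]$ survives in $H_*(A^{-n}_s)$ precisely when the first $2n$ steps of the staircase do not all lie in the region $\{\min(i,j-s)\le n-1\}$, and that this condition is equivalent to $V_s\ge n$. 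One clean way to do this is via mirroring: $\nu_{-m}(K)=-\nu_m(-K)$, and for the mirror of an L-space knot the surjectivity of the truncated positive-surgery map can be read off from the $V_s$ of $K$. Either way, this step is a genuine lemma that needs to be written out, not just asserted; once it is in place, the semigroup calculation you describe is indeed routine and your boundary check for $p>3$ is correct.
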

\vspace{3mm}
\noindent{}\textbf{Organization of the paper.} In Section \ref{section-background} we review the  constructions of the concordance invariants $\nu(K)$ and $\nu^+(K)$. In Section \ref{section-invariants} we define the invariants $\nu_n(K)$ and prove its properties: monotonicity, stabilization, and behavior under mirroring. In Section \ref{section-computations} we compute $\nu_n(K)$ for special families of knots and compare them to $\nu(K)$ and $\nu^+(K)$. In Section \ref{section-future} we pose some questions about the concordance invariants $\nu_n(K)$. 
\\\\
\noindent{}\textbf{Acknowledgements.} The author thanks her advisors Peter Ozsv\'ath and Zolt\'an Szab\'o for their guidance. She also thanks Adam Levine for carefully reading the thesis version and for corrections. 

\section{A brief background on the invariants $\nu(K)$ and $\nu^+(K)$} \label{section-background}
A four-dimensional cobordism equipped with a Spin$^c$ structure between two three-manifolds induces a map on the Heegaard Floer homology groups \cite{OzSzfour}. In particular, for a knot $K$ in $S^3$, the 2-handle attachment cobordism from $S^3_{N}(K)$, respectively $S^3_{-N}(K)$, to $S^3$ induces maps:
\begin{eqnarray}
\label{eqn}
\widehat{v}_{s, *}: \widehat{\hf}(S^3_{N}(K), \mathfrak{s}_s) \to \widehat{\hf}(S^3),
& \text{ respectively, } & 
\widehat{v}_{s, *}': \widehat{\hf}(S^3) \to \widehat{\hf}(S^3_{-N}(K), \mathfrak{s}_s), \\
\label{pluseqn}
v_{s, *}^+:{\hf}^+(S^3_{N}(K), \mathfrak{s}_s) \to {\hf}^+(S^3),
& \text{ respectively, } &
{v}_{s, *}^{+'}: {\hf}^+(S^3) \to {\hf}^+(S^3_{-N}(K), \mathfrak{s}_s), \\
\label{minuseqn}
v^-_{s, *} : \hf^-(S^3_N K, \s_s) \to \hf^-(S^3),
& \text{respectively, } &
v_{s, *}^{-'}: \hf^-(S^3) \to \hf^-(S^3_{-N} K, \s_s), 
\end{eqnarray}
where $\mathfrak{s}_s$ denotes the restriction to $S^3_N(K)$, respectively $S^3_{-N}(K)$, of a Spin$^c$ structure $\mathfrak{t}$ on the corresponding 2-handle cobordism such that 
\begin{eqnarray*}
\langle c_1(\tspin), [\widehat{F}] \rangle + N = 2s, 
&\text{ respectively, }&
\langle c_1(\tspin), [\widehat{F}] \rangle - N = 2s, 
\end{eqnarray*}
where $\widehat{F}$ is a capped-off Seifert surface for $K$. These cobordism maps on $\widehat{\hf}$ and $\hf^+$ play a key role in defining the concordance invariants $\nu$ and $\nu^+$.

\begin{definition}[\cite{OzSzRational}, Section 9]
The concordance invariant $\nu(K)$ is defined as
\[\nu (K) = \min\{s \in \mathbb{Z} \ | \ \widehat{v}_{s, *} \text{ is surjective}\} .\]
\end{definition}

\begin{definition}
The concordance invariant $\nu'(K)$ is defined as 
\[ \nu'(K) = \max\{ s \in \mathbb{Z} \ | \ \widehat{v}_{s, *}' \text{ is injective} \} .\]
\end{definition}

For a rational homology $3$-sphere $Y$ with a Spin$^c$ structure $\s$, $\hf^+(Y, \s)$ can be decomposed as the direct sum of two groups: the first group is the image of $\hf^\infty(Y, \s) \cong \mathbb{F}[U, U^{-1}]$ in $\hf^+(Y, \s)$, which is isomorphic to $\mathfrak{T}^+ = \mathbb{F}[U, U^{-1}]/U\mathbb{F}[U]$; the second group is $\hf_{\text{red}}(Y, \s) = \hf^+(Y, \s)/\mathfrak{T}^+ $. That is, 
\[ \hf^+(Y, \s) = \mathfrak{T}^+ \oplus \hf_\text{red}(Y, \s).\]

\begin{definition}[\cite{Hom-Wu}]
The concordance invariant $\nu^+$ is defined as
\[\nu^+(K) = \min\{s \ | \ v_{s,*}^+: \hf^+(S^3_N(K), \s_s) \to \hf^+(S^3) \text{ sends 1 to 1}\}\]
where $1$ denotes the lowest graded generator in the subgroup $\mathfrak{T}^+$ of the homology, and $N$ is sufficiently large so that the integer surgery formula holds. 
\end{definition}

Equivalently, Hom \cite{hom-survey} defines the invariant $\nu^-(K)$ in terms of the map $v^-_{s, *} : \hf^-(S^3_N K, \s_s) \to \hf^-(S^3)$.
\begin{definition}[\cite{hom-survey}] The concordance invariant
\[\nu^-(K) = \text{min}\{ s \in \mathbb{Z} \ | \ v^-_{s, *} \text{ is surjective}\} \]
is equal to $\nu^+(K)$.
\end{definition}

Hom and Wu show that 
\[ \tau(K) \leq \nu(K) \leq \nu^+(K)\]
and $\nu^+(K) \geq 0$. In addition, $\nu^+$ gives a lower bound on the four-ball genus
$\nu^+(K) \leq g_4(K)$.
Furthermore, Hom and Wu provide a family of knots where $\nu^+(K)$ is an arbitrarily better bound on $g_4(K)$ than $\tau(K)$.

The concordance invariants $\nu$ and $\nu^+$ are easily computable from $\cfk^\infty(K)$ via the large integer surgery formulas. 
Let $CX$ denote the subgroup of $\cfk^\infty(K)$ generated by elements $[x,i,j]$ that lie in filtration level $(i,j) \in X \subset \mathbb{Z} \oplus \mathbb{Z}$. 
Consider the chain maps
\begin{eqnarray*}
\widehat{v}_s: C\{\max(i,j-s) = 0\} \to C\{i=0\},
\\
{v}_s^+: C\{\max(i,j-s) \geq 0\} \to C\{i \geq 0\},
\end{eqnarray*}
defined by taking the quotient by $C\{i < 0 , j = s\}$, respectively $C\{ i < 0, j \geq s\}$, followed by the inclusions.
The large integer surgery formula of Ozsv\'ath-Szab\'o \cite{HFK} asserts that the maps $\widehat{v}_s$ and ${v}_s^+$ induce the maps from \eqref{eqn} and \eqref{pluseqn}. Similarly, consider the chain maps
\begin{eqnarray*}
\widehat{v}_s':  C\{i=0\} \to C\{\min(i,j-s) = 0\},
\\
{v}_s^{+'}:   C\{i \geq 0\} \to C\{\min(i,j-s) \geq 0\},
\end{eqnarray*}
consisting of quotienting by $C\{ i = 0, j \leq s \}$ followed by the inclusion. Ozsv\'ath and Szab\'o \cite{HFK} show that these maps induce the maps from \eqref{eqn} and \eqref{pluseqn}.

We introduce a concordance invariant $\nu^{+'}$, so that the pair $\nu^+$ and $\nu^{+'}$ is the $\hf^+$ analogue to the pair $\nu$ and $\nu'$.
\begin{definition} The concordance invariant $\nu^{+'}$ is defined as 
\[ \nu^{+'}(K) = \text{max}\{ s  \in \mathbb{Z}\ | \ v_{s,*}^{+'} : \hf^+(S^3) \to \hf^+(S^3_{-N}(K), \s_s) \text{ is injective}\} \]
where $-N$ is sufficiently negative so that the (negative) large integer surgery formula holds.
\end{definition}
We prove a mirroring property which relates $\nu^{+'}(K)$ to the invariant $\nu^+(-K)$ of the mirror of $K$:
\begin{lemma}
$\nu^{+'}(K) = -\nu^+(-K)$.
\label{lemma:nuplus}
\end{lemma}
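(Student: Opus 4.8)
The plan is to deduce the identity from the behavior of the surgery cobordism maps under orientation reversal, together with the already-recorded equality $\nu^+ = \nu^-$. The two ingredients are the mirror/orientation-reversal identification $S^3_{-N}(K) \cong -S^3_N(-K)$ of the negative-surgery three-manifold for $K$ with the orientation reversal of the positive-surgery three-manifold for the mirror $-K$, and the duality of Heegaard Floer homology under orientation reversal, under which $\hf^+(Y,\s)$ is the dual of $\hf^-(-Y,\s)$ and a cobordism map is carried to the adjoint of the map induced by the orientation-reversed cobordism (running in the opposite direction). Applying this to the two-handle cobordism that defines $v^{+'}_{s,*}$ for $K$ will identify that map, up to duality, with the positive-surgery map $v^-_{t,*}$ for $-K$.

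The main computation is to track this identification, including the Spin$^c$ labeling. The cobordism giving $v^{+'}_{s,*}\colon \hf^+(S^3)\to\hf^+(S^3_{-N}(K),\s_s)$ is specified by $\langle c_1(\tspin),[\widehat F]\rangle - N = 2s$. Reversing orientation reverses the arrow and sends $\hf^+$ to the dual of $\hf^-$; under mirroring the capped Seifert surface changes orientation and the framing changes sign, so that the defining equation becomes $\langle c_1(\tspin'),[\widehat F']\rangle + N = 2(-s)$, which is exactly the condition singling out the Spin$^c$ structure $\s_{-s}$ for the positive surgery $S^3_N(-K)$. Hence $v^{+'}_{s,*}(K)$ is the adjoint (dual) of $v^-_{-s,*}(-K)\colon \hf^-(S^3_N(-K),\s_{-s})\to\hf^-(S^3)$. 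I expect this bookkeeping --- pinning down the orientation and Spin$^c$ conventions so that the sign $s\mapsto -s$ comes out correctly --- to be the principal obstacle.

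With the duality in hand the argument closes by the elementary fact that, working over a field and degree by degree on the finite-dimensional graded pieces, a linear map is injective if and only if its dual is surjective. Thus $v^{+'}_{s,*}(K)$ is injective if and only if $v^-_{-s,*}(-K)$ is surjective, giving an equality of sets
\[
\{s : v^{+'}_{s,*}(K)\text{ injective}\} \;=\; -\{t : v^-_{t,*}(-K)\text{ surjective}\}.
\]
Taking the maximum of the left-hand set and using that $\max(-S) = -\min(S)$, together with $\nu^-(-K) = \min\{t : v^-_{t,*}(-K)\text{ surjective}\}$ and the recorded identity $\nu^- = \nu^+$, yields
\[
\nu^{+'}(K) = -\min\{t : v^-_{t,*}(-K)\text{ surjective}\} = -\nu^-(-K) = -\nu^+(-K),
\]
which is the claim. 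Phrasing the conclusion through this equality of sets has the advantage of avoiding any separate monotonicity statement for injectivity or surjectivity in $s$.
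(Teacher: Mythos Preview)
Your proof is correct and follows essentially the same approach as the paper: both establish the equivalence that $v^{+'}_{s,*}$ for $K$ is injective if and only if $v^-_{-s,*}$ for $-K$ is surjective, then invoke $\nu^- = \nu^+$. The paper phrases the underlying duality via the chain-level identification $\cfk^\infty(-K)\simeq \cfk^\infty(K)^*$ (together with the large surgery formula), whereas you phrase it at the three-manifold level via $S^3_{-N}(K)\cong -S^3_N(-K)$ and the $\hf^+/\hf^-$ duality under orientation reversal; these are the same fact viewed through two lenses.
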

\begin{proof}
Recall the symmetry of $\cfk^\infty$ under mirroring (\cite{HFK}, Section 3.5):
\[\cfk^\infty(-K) \simeq \cfk^\infty(K)^*\]
where $\cfk^\infty(K)^*$ is the dual complex $\text{Hom}_{\mathbb{F}[U, U^{-1}]}(\cfk^\infty(K), \mathbb{F}[U, U^{-1}])$. Therefore, the following conditions are equivalent:
\begin{eqnarray*}
&&v_{-s,*}^{+'} \colon \hf^+(S^3) \to \hf^+(S^3_{-N}(K), \s_{-s}) \text{ is injective}
\\
&\iff&
v_{s,*}^- \colon \hf^-(S^3_{N}(-K), \s_{s}) \to \hf^-(S^3) \text{ is surjective}
\end{eqnarray*}
which implies the result. 
\end{proof}

It follows from the above lemma that the invariant $\nu^{+'}$ exhibits properties similar to $\nu^+$:
\[\nu^{+'}(K) \leq \nu'(K) \leq \tau(K) \leq \nu(K) \leq \nu^+(K)\]
and $\nu^{+'}(K) \leq 0$. In addition, the absolute value of $\nu^{+'}(K)$ gives a lower bound on the four-ball genus. 
\begin{theorem}
$| \nu^{+'}(K) | \leq g_4(K)$
\end{theorem}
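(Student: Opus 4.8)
The plan is to deduce the bound directly from Lemma~\ref{lemma:nuplus}, together with the Hom--Wu inequality $\nu^+ \leq g_4$ recalled above and the standard fact that the smooth four-ball genus is invariant under mirroring. Since all of the analytic and surgery-theoretic content has already been packaged into the statements recalled earlier, the argument will be a short formal deduction rather than a new computation.

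First I would use the inequality $\nu^{+'}(K) \leq 0$ noted above, which lets me replace the absolute value by a sign: $|\nu^{+'}(K)| = -\nu^{+'}(K)$. Next I would invoke Lemma~\ref{lemma:nuplus}, which identifies $\nu^{+'}(K) = -\nu^+(-K)$, so that $-\nu^{+'}(K) = \nu^+(-K)$. At this point the problem is reduced entirely to a statement about $\nu^+$ of the mirror knot $-K$.

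Then I would apply the Hom--Wu bound $\nu^+(J) \leq g_4(J)$ to the knot $J = -K$, giving $\nu^+(-K) \leq g_4(-K)$. Finally I would use that mirroring is realized by an orientation-reversing self-diffeomorphism of $S^3$ (equivalently, by reflecting a slice surface in $B^4$), so that $g_4(-K) = g_4(K)$. Stringing these together yields $|\nu^{+'}(K)| = -\nu^{+'}(K) = \nu^+(-K) \leq g_4(-K) = g_4(K)$, which is the desired inequality.

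There is no serious obstacle here: the only points requiring any care are the appeal to $g_4(-K) = g_4(K)$, which is a standard smooth-topology fact, and the bookkeeping of signs when passing between $\nu^{+'}(K)$ and $\nu^+(-K)$ via the mirroring lemma. The substantive inputs---the surgery-formula description of the cobordism maps and the Hom--Wu genus bound for $\nu^+$---are used as black boxes.
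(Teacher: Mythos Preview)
Your proposal is correct and follows essentially the same approach as the paper: the paper's proof is a one-line appeal to Lemma~\ref{lemma:nuplus} together with the Hom--Wu bound $\nu^+(K)\leq g_4(K)$, and you have simply spelled out the sign bookkeeping and the (standard, implicit) fact that $g_4(-K)=g_4(K)$.
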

\begin{proof}
This follows from the fact that $\nu^+(K) \leq g_4(K)$ and Lemma~\ref{lemma:nuplus}.
\end{proof}

\section{The concordance invariants $\nu_n(K)$} \label{section-invariants}

The construction of the concordance invariants $\nu_n(K)$ uses truncated Heegaard Floer homology $\HF^n(Y, \s)$, described in \cite{verticaltruncation, OzSzintegersurgeries}.
$\HF^n(Y, \s)$ is the homology of the kernel $\cf^n(Y, \s)$ of the multiplication map \[U^n: \cf^+(Y, \s) \to \cf^+(Y, \s)\] where $n \in \mathbb{Z}_+$.  
The two-handle cobordism from $S^3_N K$, respectively $S^3_{-N}K$, to $S^3$ induce a map on the truncated Floer chain complex
\begin{eqnarray*}
v_{s}^n : \CF^n(S^3_NK, \mathfrak{s}_s) \to \CF^n(S^3), 
& \text{ respectively, } & 
v_{s}^{-n}: \CF^n(S^3) \to \CF^n(S^3_{-N}K, \s_s),
\end{eqnarray*}
and on the truncated Floer homology 
\begin{eqnarray*}
v_{s, *}^n : \hf^n(S^3_NK, \mathfrak{s}_s) \to \hf^n(S^3),
& \text{ respectively, } &
v_{s, *}^{-n}: \hf^n(S^3) \to \hf^n(S^3_{-N}K, \s_s).
\end{eqnarray*}
where $\mathfrak{s}_s$ denotes the restriction to $S^3_N(K)$, respectively $S^3_{-N}(K)$, of a Spin$^c$ structure $\mathfrak{t}$ on the corresponding 2-handle cobordism such that 
\begin{eqnarray*}
\langle c_1(\tspin), [\widehat{F}] \rangle + N = 2s, 
&\text{ respectively, }&
\langle c_1(\tspin), [\widehat{F}] \rangle - N = 2s, 
\end{eqnarray*}
where $\widehat{F}$ is a capped-off Seifert surface for $K$. These cobordism maps on $\widehat{\hf}$ and $\hf^+$ play a key role in defining the concordance invariants $\nu$ and $\nu^+$. 

We extend the large integer surgery formula of Ozsv\'ath-Szab\'o to truncated Heegaard Floer homology. 
\begin{proposition}
 [Negative Large Integer Surgery Formula for $\hf^n$]
 \label{negsurgeryformula}
Consider the subquotient complex 
\[\cfk^{-n}(S^3, K,m) = C\{ 0 \leq \min(i,j-m) \leq n-1\}\]
of $\cfk^+(S^3, K, m)$ generated by $[\textbf{x},i,j]$ with $0 \leq \min(i,j-m) \leq n-1$. 
For each $m \in \mathbb{Z}$, there is an integer $N(m)$ such that for all $p \geq N(m)$, the map $\Phi$ of Ozsvath-Szabo induces isomorphisms in the following diagram:
\[
\begin{tikzcd}
0 \arrow{r}{} & \cfk^{-n}(S^3,K,m)   \arrow{r}{} \arrow{d}{\Phi(n)} & \cfk^+(S^3,K,m) \arrow{r}{U^n}\arrow{d}{\Phi^+} & \cfk^+(S^3,K,m) \arrow{r}{} \arrow{d}{\Phi^+}&0 \\
0 \arrow{r}{}  & \CF^n(S^3_{-p}K,[m])  \arrow{r}{} &  \CF^+(S^3_{-p}K,[m]) \arrow{r}{U^n} &\CF^+(S^3_{-p}K,[m]) \arrow{r}{} & 0 .\\
\end{tikzcd}
\]
\end{proposition}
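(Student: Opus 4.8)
The plan is to deduce the truncated statement from the ordinary (negative) large integer surgery formula by an algebraic five-lemma argument applied to the two short exact sequences forming the rows of the diagram. First I would recall that the Ozsv\'ath--Szab\'o map $\Phi^+$ is a $U$-equivariant chain map which, for every $p \geq N(m)$, is a quasi-isomorphism $\cfk^+(S^3,K,m) \to \CF^+(S^3_{-p}K,[m])$. This is exactly the content of the negative large integer surgery formula, and it provides the middle and right vertical arrows of the diagram together with the fact that each induces an isomorphism on homology.

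Next I would verify that both rows are genuine short exact sequences of chain complexes. On the bottom row, $U^n$ acts surjectively on $\CF^+(S^3_{-p}K,[m])$ at the chain level (as $\CF^+$ is $U$-divisible), and $\CF^n(S^3_{-p}K,[m])$ is by definition its kernel. On the top row, $U^n$ sends $[x,i,j]$ to $[x,i-n,j-n]$, so on $\cfk^+(S^3,K,m)=C\{\min(i,j-m)\geq 0\}$ the class $U^n[x,i,j]$ survives in the quotient precisely when $\min(i,j-m)\geq n$; hence $U^n$ is surjective with kernel exactly $C\{0\leq \min(i,j-m)\leq n-1\}=\cfk^{-n}(S^3,K,m)$. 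This identifies the top-left term with $\ker U^n$, matching the left-hand entry of the diagram.

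Since $\Phi^+$ commutes with $U$, it carries $\ker(U^n)$ into $\ker(U^n)$, so its restriction is the left vertical arrow $\Phi(n)\colon \cfk^{-n}(S^3,K,m)\to \CF^n(S^3_{-p}K,[m])$ and the left-hand square commutes; the two right-hand squares commute by the same $U$-equivariance. Passing to homology, each row yields a long exact sequence, and the vertical maps assemble into a commutative ladder in which four of the five terms surrounding $H_*(\cfk^{-n}(S^3,K,m))$ carry the isomorphism $\Phi^+_*$. The five lemma then forces the remaining map $\Phi(n)_*$ to be an isomorphism, which is the desired conclusion.

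I expect the only genuine subtlety to lie in the first step: confirming that the map invoked here is the $U$-equivariant quasi-isomorphism of the negative surgery formula, so that naturality of the connecting homomorphism makes the ladder commute, and that a single threshold $N(m)$ serves all three arrows. Because the middle and right arrows are literally the same map $\Phi^+$ and the left arrow is merely its restriction, one application of the surgery formula produces a uniform $N(m)$, after which the diagram chases are routine.
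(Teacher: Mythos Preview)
Your proposal is correct and follows essentially the same approach as the paper: invoke the Ozsv\'ath--Szab\'o negative large integer surgery formula for $\Phi^+$, use $U$-equivariance to obtain the restricted map $\Phi(n)$, and conclude via the Five Lemma. The only minor difference is that the paper cites the stronger fact that $\Phi^+$ is an isomorphism of chain complexes (not merely a quasi-isomorphism), so the Five Lemma applies already at the chain level; your homology-level argument via the induced long exact sequences is a perfectly valid variant and would work even under the weaker hypothesis you stated.
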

\begin{proof}
The map $\Phi^+$ is an isomorphism of chain complexes by Theorem 4.1 of \cite{HFK}. By the Five Lemma, so is $\Phi(n)$. 
\end{proof}

\begin{proposition}[Large Positive Integer Surgery Formula for $\hf^n$] 
\label{possurgeryformula}
Consider the subquotient complex
\[\cfk^n(S^3, K, m) = C\{ 0 \leq \max(i,j-m) \leq n-1\},\]
of $\cfk^+(S^3,K,m)$ generated by $[\textbf{x},i,j]$ with $0 \leq \max(i,j-s) \leq n-1$. For each $m \in \mathbb{Z}$, there is an integer $N(m)$ such that for all $p \geq N(m)$, the map $\Psi$ of Ozsvath-Szabo induces isomorphisms in the following diagram:
\[
\begin{tikzcd}
0 \arrow{r}{} & \CF^n(S^3_p K, [m])   \arrow{r}{} \arrow{d}{\Psi(n)} & \cfk^+(S^3_pK, [m]) \arrow{r}{U^n}\arrow{d}{\Psi^+} & \cfk^+(S^3_pK, [m]) \arrow{r}{} \arrow{d}{\Psi^+}&0 \\
0 \arrow{r}{}  & \cfk^n(S^3,K,m)  \arrow{r}{} &  ^{b}\cfk^+(S^3,K,m) \arrow{r}{U^n} &^{b}\cfk^+(S^3,K,m) \arrow{r}{} & 0 .\\
\end{tikzcd}
\]
\end{proposition}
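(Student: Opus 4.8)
The plan is to argue exactly as in Proposition~\ref{negsurgeryformula}, replacing the negative large surgery formula by its positive counterpart and reversing the direction of the vertical maps. The essential structural fact is that each of the two rows is a short exact sequence of chain complexes. The top row is short exact by the very definition of the truncation: $\CF^n(S^3_p K,[m])$ is the kernel of the chain endomorphism $U^n$ of $\CF^+(S^3_p K,[m])$, and $U$ (hence $U^n$) acts surjectively on $\CF^+$, so $0 \to \CF^n \to \CF^+ \xrightarrow{U^n} \CF^+ \to 0$ is exact. For the bottom row I would first check that $\cfk^n(S^3,K,m) = C\{0 \le \max(i,j-m) \le n-1\}$ is precisely the kernel of $U^n$ on ${}^b\cfk^+(S^3,K,m)$: since $U$ lowers the filtration level $(i,j)$ by $(1,1)$, an element $[\mathbf{x},i,j]$ with $\max(i,j-m)\ge 0$ is annihilated by $U^n$ exactly when $\max(i-n,(j-n)-m)<0$, i.e.\ when $\max(i,j-m)\le n-1$; the same computation shows $U^n$ is surjective on ${}^b\cfk^+$, so the bottom row is short exact as well.

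Next I would invoke the positive large integer surgery formula of Ozsv\'ath--Szab\'o, the analogue of Theorem~4.1 of \cite{HFK} used in Proposition~\ref{negsurgeryformula}: for each $m$ there is an integer $N(m)$ such that for all $p \ge N(m)$ the map $\Psi^+$ is an \emph{isomorphism} of chain complexes identifying $\cfk^+(S^3_p K,[m])$ with ${}^b\cfk^+(S^3,K,m)$. This supplies the two right-hand vertical arrows. Since $\Psi^+$ is a morphism of $\mathbb{F}[U]$-module complexes it commutes with $U^n$, so the right-hand square commutes and $\Psi^+$ carries $\ker(U^n)$ to $\ker(U^n)$; its restriction is the map $\Psi(n)$ on the left, and the left-hand square commutes by construction. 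Thus the diagram is a map of short exact sequences whose middle and right vertical arrows are isomorphisms.

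The conclusion is then immediate from the Five Lemma: with the middle and right vertical maps isomorphisms, the left vertical map $\Psi(n)$ is forced to be an isomorphism as well (equivalently, one runs the standard two-line diagram chase to see that $\Psi(n)$ is injective and surjective). I do not anticipate a genuine obstacle beyond bookkeeping; the only point requiring care is the identification of $\cfk^n(S^3,K,m)$ with $\ker(U^n)$ on ${}^b\cfk^+(S^3,K,m)$ together with the $U$-equivariance of $\Psi^+$, after which the positive case is a verbatim mirror of the negative one.
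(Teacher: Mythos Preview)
Your proposal is correct and follows essentially the same approach as the paper: invoke the Ozsv\'ath--Szab\'o positive large integer surgery formula (Theorem~4.4 of \cite{HFK}) to conclude $\Psi^+$ is an isomorphism of chain complexes, then apply the Five Lemma to deduce that $\Psi(n)$ is an isomorphism. Your added verification that the rows are short exact and that the diagram commutes is more detail than the paper supplies, but the underlying argument is identical.
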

\begin{proof}
The map $\Psi^+$ is an isomorphism of chain complexes by Theorem 4.4 of \cite{HFK}. By the Five Lemma, so is $\Psi(n)$.
\end{proof}

\begin{notation*}We sometimes use the notation for $n > 0$:
\begin{eqnarray*}
A^n_m &=& C\{ 0 \leq \max(i,j-m) \leq n-1\}
\\
A^{-n}_m &=& C\{ 0 \leq \min(i,j-m) \leq n-1\} 
\\
A_m^+ &=&C\{ 0 \leq \max(i,j-m) \} 
\\
A^{+'}_m &=&C\{ 0 \leq \min(i,j-m)\} 
\end{eqnarray*}
and 
\begin{eqnarray*}
B^{n} &=& C\{0 \leq i \leq n-1\}
\\
B^+ &=& C\{0 \leq i\}.
\end{eqnarray*}
\end{notation*}

\label{section-mu-n}

The cobordism maps on truncated Heegaard Floer groups lead us to define concordance invariants $\nu_n$.
\begin{definition} 
For $n > 0$, define

\begin{eqnarray*}
\nu_n(K) = \text{min}\{ s \in \mathbb{Z}\ | \ v_{s}^n : \CF^n(S^3_N(K), s) \to \CF^n(S^3)  \text{ induces a surjection on homology}\}, 
\end{eqnarray*}
and for $n < 0$, define
\begin{eqnarray*}
\nu_n(K) = \text{max}\{ s  \in \mathbb{Z}\ | \ v_{s}^{n} : \CF^{-n}(S^3) \to \CF^{-n}(S^3_{-N}(K), s)  \text{ induces an injection on homology}\}, 
\end{eqnarray*}
where $N$ is sufficiently large so that the Ozsv\'ath-Szab\'o large integer surgery formula of \cite{HFK} holds. For $n=0$, we define $\nu_0(K) = \tau(K)$. 
\end{definition}

\begin{remark}
For $n = \pm 1$, these invariants are already known as: 
$\nu_1(K) = \nu(K) $ and $\nu_{-1}(K) = \nu'(K)$.
\end{remark}

\begin{proposition}  $\nu_n(K)$ is a concordance invariant. 
\begin{proof} Suppose $K_1$ is concordant to $K_2$. Then $S^3_{N}(K_1)$ is homology cobordant to $S^3_{N}(K_2)$. This implies that there exists a (smooth, connected, oriented) cobordism $W$ from $S^3_{N}(K_1)$ to $S^3_{N}(K_2)$ with $H_i(W, \mathbb{Q}) = 0$ for $i = 1, 2$. 

The map 
\[\hf^n(S^3_{N}(K_1), s) \to \hf^n(S^3)\] 
induced by the cobordism obtained by adding a two-handle along $K_1$ factors through $\hf^n(S^3_{N}(K_2), s)$. So if it is surjective then the map 
\[\hf^n(S^3_{N}(K_2), s) \to \hf^n(S^3)\]
is also surjective. So we get that $\nu_n(K_1) \geq \nu_n(K_2)$. The same argument with $K_1$ and $K_2$ switched shows the inequality $\nu_n(K_2) \geq \nu_n(K_1)$. Therefore $\nu_n(K_1) = \nu_n(K_2)$.  

For negative $n$, that $\nu_n(K)$ is a concordance invariant follows from a similar argument to the above.
\end{proof}
\end{proposition}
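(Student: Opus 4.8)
The plan is to prove the two inequalities $\nu_n(K_1) \ge \nu_n(K_2)$ and $\nu_n(K_2) \ge \nu_n(K_1)$; since the concordance hypothesis is symmetric in $K_1$ and $K_2$, it suffices to establish one of them, say for $n > 0$ (the case $n = 0$ is the known concordance invariance of $\tau$, and $n < 0$ is dual). Because $\nu_n$ is the minimum of the set of $s$ for which $v_{s,*}^n \colon \hf^n(S^3_N(K_1), s) \to \hf^n(S^3)$ is surjective, the inequality $\nu_n(K_1) \ge \nu_n(K_2)$ will follow once I show that this set for $K_1$ is contained in the corresponding set for $K_2$; equivalently, that surjectivity of the $K_1$ map at level $s$ implies surjectivity of the $K_2$ map at the same level.

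The topological input I would use is that a concordance from $K_1$ to $K_2$ is a properly embedded annulus $C \subset S^3 \times [0,1]$ whose complement has the rational homology of $(S^3 \setminus K) \times [0,1]$. Surgering $C$ with framing $N$ then yields a cobordism $W \colon S^3_N(K_1) \to S^3_N(K_2)$ with $H_i(W; \mathbb{Q}) = 0$ for $i = 1, 2$, a rational homology cobordism. Such a $W$ carries a unique $\mathrm{Spin}^c$ structure restricting to any prescribed $\mathrm{Spin}^c$ structure on a boundary component, and since $C$ interpolates between the capped Seifert surfaces of $K_1$ and $K_2$, the indexing condition $\langle c_1(\tspin), [\widehat{F}]\rangle + N = 2s$ is transported consistently across $W$. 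Thus the label $s$ denotes compatible $\mathrm{Spin}^c$ structures at both ends.

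The crux is a factorization of cobordism maps. I would check that the two-handle cobordism $X_1 \colon S^3_N(K_1) \to S^3$ used to define $v_{s,*}^n$ is diffeomorphic, by a $\mathrm{Spin}^c$-preserving diffeomorphism, to the composite $W \cup X_2$, where $X_2 \colon S^3_N(K_2) \to S^3$ is the analogous two-handle cobordism for $K_2$: stacking the surgered concordance beneath the handle that undoes the surgery on $K_2$ reproduces the handle that undoes the surgery on $K_1$. Granting this, the composition law for cobordism maps gives $v_{s,*}^n(K_1) = v_{s,*}^n(K_2) \circ F_W$, where $F_W$ is the map induced by $W$. This composition law holds on $\hf^n$ because these groups are the homology of $\ker(U^n \colon \CF^+ \to \CF^+)$, and the $U$-equivariant cobordism maps on $\CF^+$ restrict to this kernel and still compose under gluing. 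Since the image of a composite lies in the image of its final factor, surjectivity of $v_{s,*}^n(K_1)$ forces surjectivity of $v_{s,*}^n(K_2)$, which is exactly the set containment needed.

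The step I expect to be the main obstacle is making this factorization rigorous: one must verify both that the truncated theory $\hf^n$ inherits functorial cobordism maps satisfying the gluing/composition formula, and that the $\mathrm{Spin}^c$ bookkeeping through the homology cobordism $W$ matches the surgery indexing at both ends so that the factored maps genuinely carry the same label $s$. For $n < 0$ I would run the dual argument: replace surjectivity by injectivity, the minimum by the maximum, and factor the map $\hf^{-n}(S^3) \to \hf^{-n}(S^3_{-N}(K_1))$ through $\hf^{-n}(S^3_{-N}(K_2))$ using the same homology cobordism, so that injectivity for $K_1$ forces injectivity for $K_2$ and hence $\nu_n(K_1) \le \nu_n(K_2)$.
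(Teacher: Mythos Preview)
Your proposal is correct and follows essentially the same approach as the paper: use the rational homology cobordism $W$ from $S^3_N(K_1)$ to $S^3_N(K_2)$ coming from the concordance, factor the two-handle cobordism map for $K_1$ through $\hf^n(S^3_N(K_2),s)$ via composition of cobordism maps, and conclude that surjectivity for $K_1$ forces surjectivity for $K_2$; then appeal to symmetry and handle $n<0$ dually. In fact you supply more detail than the paper does on the construction of $W$, the $\mathrm{Spin}^c$ matching, and the functoriality of $\hf^n$ under gluing.
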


\begin{proposition}[Mirroring property]$\nu_n(-{K}) = - \nu_{-n}(K)$. 
\begin{proof}
Recall the symmetry of $\cfk^\infty$ under mirroring (\cite{HFK}, Section 3.5):
\[\cfk^\infty(-K) \simeq \cfk^\infty(K)^*\]
where $\cfk^\infty(K)^*$ is the dual complex $\text{Hom}_{\mathbb{F}[U, U^{-1}]}(\cfk^\infty(K), \mathbb{F}[U, U^{-1}])$. Letting $C = \cfk^\infty(S^3,K)$, and $n  > 0$, the following conditions are equivalent:
\begin{eqnarray*}
&&v_{-s,*}^{-n}(K) \colon \hf^n(S^3) \to \hf^n(S^3_{-N}(K), \s_{-s}) \text{ is injective}\\
&\iff&v_{-s}^{-n}(K) \colon C\{0 \leq i \leq n-1\} \to C\{ 0 \leq \min(i,j+s) \leq n-1\} \text{ is injective on } H_*\\
&\iff&
U^{n-1}v^n_s(-K) \colon C\{-(n-1) \leq \max(i,j-s) \leq 0\} \to C\{-(n-1 )\leq i \leq 0\} \text{ is surjective on }H_*
\\
&\iff&
v_{s}^n(-K) \colon C\{0 \leq \max(i,j-s) \leq n-1 \} \to C\{ 0 \leq i \leq n-1\}  \text{ is surjective on }H_*\\
&\iff&
v_{s,*}^n(-K) \colon \hf^n(S^3_{N}(-K), \s_{s}) \to \hf^n(S^3) \text{ is surjective}
\end{eqnarray*}
where $U^{n-1}$ is a degree-shifting isomorphism on $\cfk^\infty(K)$.
Therefore,
\begin{eqnarray*}
\nu_n(-K) = \min(s \in \mathbb{Z} \ | \ v_{s,*}^n(-K) \text{ is surjective}) = -\max( -s \in \mathbb{Z} \ | \ v_{-s,*}^{-n}(K) \text{ is injective}) = - \nu_{-n}(K).
\end{eqnarray*}
\end{proof}
\end{proposition}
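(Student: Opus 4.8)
The plan is to reduce both sides to statements about the $\mathbb{Z}\oplus\mathbb{Z}$-filtered complex $C = \cfk^\infty(S^3, K)$ using the two large integer surgery formulas (Propositions \ref{negsurgeryformula} and \ref{possurgeryformula}), and then to exploit the mirroring symmetry $\cfk^\infty(-K) \simeq \cfk^\infty(K)^*$ to convert an injectivity statement for $K$ into a surjectivity statement for $-K$. I would treat the case $n > 0$ in detail; the case $n < 0$ is entirely symmetric, and $n = 0$ reduces to the classical identity $\tau(-K) = -\tau(K)$. The goal is to establish, for each fixed $s$, the single equivalence that the map $v^{-n}_{-s,*}(K)$ entering the definition of $\nu_{-n}(K)$ is injective on homology if and only if the map $v^{n}_{s,*}(-K)$ entering the definition of $\nu_n(-K)$ is surjective on homology.

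First I would translate each topological condition into a statement about $C$. By the negative surgery formula, injectivity of $v^{-n}_{-s,*}(K)\colon \hf^n(S^3) \to \hf^n(S^3_{-N}(K), \s_{-s})$ is equivalent to injectivity on $H_*$ of the quotient–inclusion map $C\{0 \leq i \leq n-1\} \to C\{0 \leq \min(i, j+s) \leq n-1\}$; by the positive surgery formula, surjectivity of $v^n_{s,*}(-K)$ is equivalent to surjectivity on $H_*$ of the analogous map for $\cfk^\infty(-K)$. The crux is the middle step. Substituting $\cfk^\infty(-K) \cong C^*$ and dualizing over $\mathbb{F}[U,U^{-1}]$ converts the injection of the quotient–inclusion map into a surjection of its dual, and since dualizing reverses both filtrations it sends the filtration level $(i,j)$ to $(-i,-j)$, interchanging $\min$ with $\max$ and flipping the sign of the shift $s$. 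Thus the region $\{0 \leq \min(i,j+s) \leq n-1\}$ for $K$ corresponds to $\{-(n-1) \leq \max(i,j-s) \leq 0\}$ for $-K$, and $\{0 \leq i \leq n-1\}$ corresponds to $\{-(n-1) \leq i \leq 0\}$. Applying the degree-shifting isomorphism $U^{n-1}$ then carries these shifted windows onto the standard windows $\{0 \leq \max(i,j-s) \leq n-1\}$ and $\{0 \leq i \leq n-1\}$, reproducing exactly the map $v^n_s(-K)$ whose surjectivity on homology computes $\nu_n(-K)$.

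Granting the equivalence ``$v^{-n}_{-s,*}(K)$ injective $\iff$ $v^{n}_{s,*}(-K)$ surjective'' for every $s$, I would conclude by comparing the extremal quantities. Since $\nu_n(-K) = \min\{s \mid v^n_{s,*}(-K)\text{ surjective}\}$ and $\nu_{-n}(K) = \max\{t \mid v^{-n}_{t,*}(K)\text{ injective}\}$, the substitution $t = -s$ gives $\{s \mid v^n_{s,*}(-K)\text{ surjective}\} = \{-t \mid v^{-n}_{t,*}(K)\text{ injective}\}$, so that $\nu_n(-K) = -\max\{t \mid v^{-n}_{t,*}(K)\text{ injective}\} = -\nu_{-n}(K)$.

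I expect the main obstacle to be the bookkeeping in the dualization step rather than any conceptual difficulty: one must verify carefully that dualizing over $\mathbb{F}[U,U^{-1}]$ negates the filtration coordinates, that this simultaneously swaps $\min\leftrightarrow\max$ and $s\leftrightarrow -s$, and that the induced dual map is honestly the quotient–inclusion map of the positive surgery formula and not merely chain homotopic to it or off by one in the truncation length $n$. Confirming that $U^{n-1}$ is a genuine degree-shifting isomorphism of $\cfk^\infty(K)$, and hence preserves surjectivity on homology, closes the last gap; everything else is a formal consequence of the two surgery formulas and the definitions of $\nu_n$.
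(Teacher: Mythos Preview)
Your proposal is correct and follows essentially the same route as the paper: translate both maps into subquotient complexes of $C=\cfk^\infty(S^3,K)$ via the two large-surgery formulas, use the mirroring duality $\cfk^\infty(-K)\simeq C^*$ to turn injectivity into surjectivity while swapping $\min\leftrightarrow\max$ and $s\leftrightarrow -s$, shift by $U^{n-1}$ to realign the windows, and finish with the $\min/\max$ bookkeeping. The paper's proof is just a terser presentation of exactly this chain of equivalences.
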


\begin{proposition}[Monotonicity] $ \nu_n(K) \leq \nu_{n+1}(K)$. 
\end{proposition}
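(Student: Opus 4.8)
The plan is to reduce everything to the case $n \ge 0$ using the mirroring property just proved, and then to treat $n \ge 1$ and $n = 0$ separately. For the reduction, note that mirroring gives $\nu_j(K) = -\nu_{-j}(-K)$ for every $j$, so for a fixed $n \le -1$ the desired inequality $\nu_n(K) \le \nu_{n+1}(K)$ is equivalent to $\nu_m(-K) \le \nu_{m+1}(-K)$ with $m = -n-1 \ge 0$. Hence it is enough to prove $\nu_m(K) \le \nu_{m+1}(K)$ for all knots $K$ and all $m \ge 0$.

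For $n \ge 1$ I would prove the set inclusion
\[
\{\, s \mid v_{s,*}^{n+1}\ \text{is surjective}\,\} \subseteq \{\, s \mid v_{s,*}^{n}\ \text{is surjective}\,\},
\]
since taking the minimum over each side then yields $\nu_n(K) \le \nu_{n+1}(K)$ directly from the definition. So I fix $s$, assume $v_{s,*}^{n+1}$ is surjective, and aim to deduce that $v_{s,*}^{n}$ is surjective. Choose $N$ large enough that the positive large integer surgery formula (Proposition~\ref{possurgeryformula}) identifies the relevant groups at both truncation levels $n$ and $n+1$; a single $N$ suffices because the requisite bound is the one from the underlying $\hf^+$ surgery formula and does not depend on the truncation parameter. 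The truncated cobordism maps at levels $n$ and $n+1$ are induced by one and the same two-handle cobordism on the subcomplexes $\CF^{n} = \ker(U^{n})$ and $\CF^{n+1} = \ker(U^{n+1})$ of $\CF^+$, so by naturality of the cobordism map with respect to the $U$-action they fit into the commuting square
\[
\begin{tikzcd}
\hf^{n+1}(S^3_N K, \s_s) \arrow{r}{v_{s,*}^{n+1}} \arrow{d}{U} & \hf^{n+1}(S^3) \arrow{d}{U}\\
\hf^{n}(S^3_N K, \s_s) \arrow{r}{v_{s,*}^{n}} & \hf^{n}(S^3),
\end{tikzcd}
\]
in which the vertical maps are induced by $U \colon \CF^{n+1} \to \CF^{n}$. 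The crucial input is that the right-hand vertical map $U \colon \hf^{n+1}(S^3) \to \hf^{n}(S^3)$ is surjective: since $\hf^+(S^3) \cong \mathfrak{T}^+$ carries the zero differential, $\hf^{k}(S^3)$ is the kernel of $U^{k}$ on $\mathfrak{T}^+$, spanned by the bottom $k$ elements of the tower, and $U$ carries the length-$(n+1)$ bottom segment onto the length-$n$ one. Granting this, the argument finishes by a diagram chase: given $\beta \in \hf^{n}(S^3)$, first lift it to $\gamma \in \hf^{n+1}(S^3)$ with $U\gamma = \beta$, then use surjectivity of $v_{s,*}^{n+1}$ to choose $\delta$ with $v_{s,*}^{n+1}(\delta) = \gamma$, and finally observe that $\alpha = U\delta$ satisfies $v_{s,*}^{n}(\alpha) = U\, v_{s,*}^{n+1}(\delta) = U\gamma = \beta$.

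For $n = 0$ the inequality $\nu_0(K) = \tau(K) \le \nu(K) = \nu_1(K)$ is precisely the inequality $\tau(K) \le \nu(K)$ recalled in Section~\ref{section-background}. Together with the case $n \ge 1$ this gives $\nu_m(K) \le \nu_{m+1}(K)$ for all $m \ge 0$, and the mirroring reduction above then yields the inequality for all $n \le -1$ as well, covering every $n \in \mathbb{Z}$. I expect the step needing the most care to be the commutativity of the square: one must check that the truncated cobordism maps at consecutive levels really are restrictions of the single $\hf^+$ cobordism map and therefore intertwine the two $U$-maps. This is what the naturality statement provides, and at the identification level it reflects the fact that the surgery-formula isomorphisms of Proposition~\ref{possurgeryformula} are induced by one chain isomorphism $\Psi^+$ that commutes with $U$. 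The remaining ingredients---the set-inclusion criterion, the computation of $\hf^{k}(S^3)$, and the final diagram chase---are routine.
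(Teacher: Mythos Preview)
Your proof is correct, and for $n \ge 1$ it is identical to the paper's: the same commutative square with vertical $U$-maps and the observation that $U\colon \hf^{n+1}(S^3) \to \hf^n(S^3)$ is surjective. The one difference is in the handling of $n \le -1$. The paper argues directly via a dual square
\[
\begin{tikzcd}
\HF^{-n}(S^3) \arrow{r}{v_{s,*}^n} \arrow{d}{} & \HF^{-n}(S^3_{-N} K,\s_s)\arrow{d}{} \\
\HF^{-(n-1)}(S^3) \arrow{r}{v_{s,*}^{n-1}} & \HF^{-(n-1)}(S^3_{-N}K,\s_s)
\end{tikzcd}
\]
with vertical maps induced by inclusions of chain groups (the left one being injective on homology), and concludes that injectivity of $v_{s,*}^{n-1}$ forces injectivity of $v_{s,*}^{n}$. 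You instead reduce the negative case to the already-established nonnegative case via the mirroring identity $\nu_n(K) = -\nu_{-n}(-K)$. Both routes are short and valid; your reduction avoids writing a second diagram at the cost of invoking the mirroring proposition, while the paper's direct argument keeps the monotonicity proof self-contained.
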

\begin{proof} It is known that $\nu_{-1}(K) \leq \tau(K) \leq \nu_1(K)$, so we focus on the two separate cases where $n > 0$ and $n < 0$.

For $n > 0$, consider the commutative diagram
\[
\begin{tikzcd}
\HF^{n+1}(S^3_NK,s) \arrow{r}{v_{s,*}^{n+1}} \arrow{d}{\cdot U} & \HF^{n+1}(S^3)  \arrow{d}{\cdot U} \\
\HF^{n}(S^3_NK,s) \arrow{r}{v_{s,*}^n}  & \HF^{n}(S^3)
\end{tikzcd}
\] 
where the vertical maps are given by multiplication by $U$. 
The vertical map on the right is surjective. Thus, if ${v_{s,*}^{n+1}}$ is surjective, then so is ${v_{s,*}^n}$.

For $n < 0$, consider the commutative diagram
\[
\begin{tikzcd}
\HF^{-n}(S^3) \arrow{r}{v_{s,*}^n} \arrow{d}{i_a'} & \HF^{-n}(S^3_{-N} K,s)\arrow{d}{i_b'} \\
\HF^{-(n-1)}(S^3) \arrow{r}{v_{s,*}^{n-1}} & \HF^{-(n-1)}(S^3_{-N}K,s) 
\end{tikzcd}
\]
where the vertical maps are induced by inclusion of chain groups. In particular, the left map $i'_a$ is injective on homology. Therefore, if $v_{s,*}^{n-1}(K)$ is injective, then so is $v_{s,*}^n(K)$. We conclude that $\nu_{n-1}(K) \leq \nu_n(K)$.
\end{proof}

\begin{proposition}[Boundedness] $\nu^{+'}(K) \leq \nu_n(K) \leq \nu^+(K)$ for all $n$. 
\begin{proof}
It is known that $\nu(K) \leq \nu^+(K)$ from \cite{Hom-Wu}. For $n \geq 1$, consider the commutative diagram
\[
\begin{tikzcd}
H_*(A_k^-) \arrow{r}{j_A} \arrow{d}{v_{k,*}^-} & H_*(A_k^n)\arrow{d}{v_{k,*}^n} \\
H_*(B^-) \arrow{r}{j_B} & H_*(B^n)
\end{tikzcd}
\]
The map $j_B$ is surjective, so if $v_{k,*}^-$ is surjective, then so is $v_{k,*}^n$. 

For $n \leq -1$, consider the commutative diagram
\[
\begin{tikzcd}
 H_*(B^n)\arrow{r}{i_B}\arrow{d}{v_{k,*}^n}  & H_*(B^+)\arrow{d}{v_{k,*}^{+'}}  \\
 H_*(A_k^n)\arrow{r}{i_A} & H_*(A_k^{+'})
\end{tikzcd}
\]
The map $i_B$ is injective, so if $v_{k,*}^{+'}$ is injective, then so is $v_{k,*}^n$. 
\end{proof}

\end{proposition}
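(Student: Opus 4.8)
The plan is to reduce the two-sided bound to a single directional statement in each extreme regime and to fill in the remaining cases by monotonicity. Granting the already-established chain
\[ \nu^{+'}(K) \leq \nu'(K) = \nu_{-1}(K) \leq \tau(K) = \nu_0(K) \leq \nu(K) = \nu_1(K) \leq \nu^+(K), \]
together with the Monotonicity proposition $\nu_n(K) \leq \nu_{n+1}(K)$, it suffices to prove the upper bound $\nu_n(K) \leq \nu^+(K)$ only for $n \geq 1$ and the lower bound $\nu^{+'}(K) \leq \nu_n(K)$ only for $n \leq -1$: the complementary inequalities then follow, since for $n \geq 1$ one has $\nu^{+'}(K) \leq \nu_{-1}(K) \leq \nu_n(K)$ and for $n \leq -1$ one has $\nu_n(K) \leq \nu_1(K) \leq \nu^+(K)$.

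For the upper bound I would exploit the identity $\nu^+(K) = \nu^-(K)$, so that $\nu^+(K)$ is the smallest $s$ for which the $\hf^-$ surgery map $v^-_{s,*}$ is surjective. The goal becomes: \emph{if $v^-_{s,*}$ is surjective, then so is the truncated map $v^n_{s,*}$.} I would place both maps in a commutative square whose horizontal arrows come from the short exact sequence $0 \to B^- \to B^- \to B^n \to 0$ (and its $A$-complex analogue), in which $B^n$ is identified with the cokernel of multiplication by $U^n$ on $B^-$, up to a grading shift. The essential input is that the induced map $j_B \colon H_*(B^-) \to H_*(B^n)$ is \emph{surjective}: since $H_*(B^-) \cong \hf^-(S^3) \cong \mathbb{F}[U]$ has $U$ acting injectively, $U^n$ is injective on $H_*(B^-)$, the connecting homomorphism of the long exact sequence vanishes, and $j_B$ is onto. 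A one-line diagram chase (lift a class through the surjection $j_B$, pull it back through the surjective $v^-_{s,*}$, and push it forward on the $A$-side) then shows $v^n_{s,*}$ is surjective, giving $\nu_n(K) \leq s$ whenever $s \geq \nu^+(K)$.

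The lower bound is dual. Here $\nu^{+'}(K)$ is the largest $s$ for which the negative-surgery $\hf^+$ map $v^{+'}_{s,*}$ is injective, and I would show that injectivity of $v^{+'}_{s,*}$ forces injectivity of $v^n_{s,*}$ for $n \leq -1$. Now the connecting maps arise from the defining sequence $0 \to B^n \to B^+ \to B^+ \to 0$ that expresses $B^n$ as the $U^n$-torsion $\ker(U^n \colon B^+ \to B^+)$. The key fact is that $i_B \colon H_*(B^n) \to H_*(B^+)$ is \emph{injective}: because $H_*(B^+) \cong \hf^+(S^3) \cong \mathfrak{T}^+$ has $U$ acting surjectively, $U^n$ is surjective and the connecting map again vanishes. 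The diagram chase is the mirror of the previous one: if $v^n_{s,*}(x) = 0$, push through the $A$-side map and use commutativity plus injectivity of $v^{+'}_{s,*}$ to get $i_B(x) = 0$, then apply injectivity of $i_B$ to conclude $x = 0$. This yields $\nu_n(K) \geq s$ whenever $s \leq \nu^{+'}(K)$.

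I expect the genuine work to be not the $S^3$ homological-algebra facts, which are immediate from the $\mathbb{F}[U]$- and $\mathfrak{T}^+$-module structures, but rather the verification that the two squares actually commute. This amounts to checking that the cobordism maps $v^-_{s}$, $v^{+'}_{s}$, and $v^n_{s}$ are all induced by the \emph{same} chain-level quotient-then-include construction on $\cfk^\infty(K)$ — the one furnished by the large integer surgery formulas and their truncated refinements (Propositions \ref{negsurgeryformula} and \ref{possurgeryformula}) — and that this construction is natural with respect to multiplication by $U^n$ and the inclusions and projections defining the truncation complexes. Once that naturality is in place, the horizontal maps on the $A$-side need only exist and fit into the squares; they are not required to be isomorphisms, so no further analysis of the $A$-complexes is needed.
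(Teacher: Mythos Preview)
Your proposal is correct and follows essentially the same approach as the paper: the same two commutative squares, the same key facts that $j_B$ is surjective and $i_B$ is injective, and the same diagram chases. You supply more justification than the paper does (the long exact sequence argument for why $j_B$ and $i_B$ have the required properties, and the naturality check for commutativity), but the underlying argument is identical.
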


\begin{proposition}[Stabilization] For sufficiently large positive $n$, $\nu_n(K) = \nu^+(K)$ and $\nu_{-n}(K) = \nu^{+'}(K)$. 
\end{proposition}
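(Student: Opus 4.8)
The plan is to prove the positive‑$n$ statement $\nu_n(K)=\nu^+(K)$ directly and then obtain the negative‑$n$ statement by mirroring. Once we know $\nu_m(-K)=\nu^+(-K)$ for all sufficiently large $m>0$, the Mirroring property $\nu_{-n}(K)=-\nu_n(-K)$ together with Lemma~\ref{lemma:nuplus}, which gives $\nu^{+'}(K)=-\nu^+(-K)$, yields $\nu_{-n}(K)=-\nu_n(-K)=-\nu^+(-K)=\nu^{+'}(K)$ for large $n$. So the whole proposition reduces to the positive case.

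For $n>0$ I would apply the defining short exact sequence of truncated Floer homology, $0\to\cf^n\to\cf^+\xrightarrow{U^n}\cf^+\to 0$ (cf.\ Propositions~\ref{negsurgeryformula} and~\ref{possurgeryformula}), to the complexes $A_s^+$ and $B^+$, obtaining short exact sequences $0\to A_s^n\to A_s^+\xrightarrow{U^n}A_s^+\to 0$ and $0\to B^n\to B^+\xrightarrow{U^n}B^+\to 0$; the cobordism maps $v_s^+,v_s^n$ assemble these into a morphism of long exact sequences. Recall that $H_*(A_s^+)=\mathfrak{T}^+\oplus\hf_{\text{red}}(A_s)$ and $H_*(B^+)=\mathfrak{T}^+$, that on the tower $v^+_{s,*}$ is multiplication by $U^{V_s}$ for a non‑increasing sequence of non‑negative integers $V_s$ (so $V_s=0$ exactly when $v^+_{s,*}$ sends $1$ to $1$), and hence $\nu^+(K)=\min\{s:V_s=0\}$. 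Since the Boundedness proposition already gives $\nu_n(K)\le\nu^+(K)$, it suffices to show that $v^n_{s,*}$ fails to be surjective whenever $V_s\ge 1$ and $n$ is large.

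The long exact sequence exhibits $H_*(A_s^n)$ as an extension of $\ker(U^n\mid H_*(A_s^+))$ by $\mathrm{coker}(U^n\mid H_*(A_s^+))$. For $n$ exceeding the $U$‑torsion order of $\hf_{\text{red}}(A_s)$ these pieces are: a length‑$n$ truncation of the tower together with a bounded, low‑grading copy of $\hf_{\text{red}}(A_s)$ (the kernel), and a second copy of $\hf_{\text{red}}(A_s)$ carried by the connecting map into gradings of size $\approx 2n$ (the cokernel). The target $H_*(B^n)$ is the length‑$n$ tower $\{b_0,\dots,b_{n-1}\}$, and surjectivity amounts to hitting the top class $b_{n-1}$, whose grading grows like $2n$. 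I would then read off the image on each summand: the truncated tower maps by $U^{V_s}$ onto $\{b_0,\dots,b_{n-1-V_s}\}$, missing the top $V_s$ classes; and the low‑grading reduced summand, by naturality with $v^+_{s,*}$, lands in a fixed bounded range of tower degrees and so cannot reach $b_{n-1}$ for large $n$.

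The crux is the high‑grading reduced summand, which sits in exactly the degree range where $b_{n-1}$ lives and therefore cannot be excluded on grading grounds alone. Here I would invoke naturality of the connecting homomorphisms: this summand is $\partial_A$ applied to $\mathrm{coker}(U^n\mid H_*(A_s^+))$, and commutativity of the morphism of long exact sequences sends it to $\partial_B$ applied to $\mathrm{coker}(U^n\mid H_*(B^+))$. But $U^n$ is surjective on $H_*(B^+)=\mathfrak{T}^+$, so that cokernel vanishes and the entire high‑grading summand maps to zero. Hence the image of $v^n_{s,*}$ lies in $\{b_0,\dots,b_{n-1-V_s}\}$ and misses $b_{n-1}$ whenever $V_s\ge 1$, so $v^n_{s,*}$ is not surjective. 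Since $\hf_{\text{red}}(A_s)$ takes only finitely many isomorphism types and stabilizes for $|s|$ large, the torsion orders and degree bounds are uniform in $s$, so a single $n_0$ works for every $s<\nu^+(K)$; for $n\ge n_0$ we conclude $\nu_n(K)=\min\{s:V_s=0\}=\nu^+(K)$. I expect this vanishing of the high‑grading reduced part—rather than the routine tower bookkeeping—to be the main obstacle, since it is precisely what prevents $\nu_n$ from undershooting $\nu^+$.
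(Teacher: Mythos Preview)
Your argument is correct. It reaches the same conclusion as the paper's proof but from the $\hf^+$ side rather than the $\hf^-$ side: the paper identifies $\cf^n\cong\cf^-\otimes_{\mathbb{F}[U]}\mathbb{F}[U]/U^n$, applies the universal coefficient theorem, and argues that when $v^-_{s,*}$ fails to be surjective the image of $v^n_{s,*}$ is $U^{n-1}$--torsion and hence misses the order--$n$ generator of $H_*(B^n)\cong\mathbb{F}[U]/U^n$. You instead run the long exact sequence of $0\to\cf^n\to\cf^+\xrightarrow{U^n}\cf^+\to 0$ and phrase the obstruction as missing the top class $b_{n-1}$. These are the same obstruction viewed from dual ends of the tower, and both rest on the tower/reduced decomposition of $\hf^\pm$ of a rational homology sphere together with the identification of $v^\pm_{s,*}$ on the tower with multiplication by $U^{V_s}$. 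The mirroring reduction for negative $n$ is handled identically in both.

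One remark: the step you flag as the crux---controlling the ``high--grading reduced summand'' coming from $\partial_A$---is actually automatic and not where the content lies. Since $U^n$ is surjective on $H_*(B^+)=\mathfrak{T}^+$, the map $i_B\colon H_*(B^n)\to H_*(B^+)$ is injective, so the image of $v^n_{s,*}$ is completely determined by $v^+_{s,*}$ restricted to $\ker(U^n\mid H_*(A_s^+))$; anything in $\ker i_A=\operatorname{Im}\partial_A$ is sent to zero for free. The genuine need for large $n$ is exactly the degree bound on $v^+_{s,*}(\hf_{\text{red}}(A_s))$ that you call routine. Your uniformity remark (only finitely many $s$ have $\hf_{\text{red}}(A_s)\neq 0$, so a single $n_0$ suffices) is a detail the paper's proof leaves implicit, so you are slightly more careful there.
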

\begin{proof}
Let $C_1 = \CF^-(S^3_NK, s)$ and $C_2 = \CF^-(S^3)$. 
There is a canonical degree shifting isomorphism
\[\cf^n(Y, \s) \cong \cf^-(Y, \s) \otimes_{\mathbb{F}[U]} \frac{\mathbb{F}[U]}{U^n}.\]
Moreover, the map 
\[v_{n,s}^- :  C_1 \otimes_{\mathbb{F}[U]} \frac{\mathbb{F}[U]}{U^n} \to  C_2 \otimes_{\mathbb{F}[U]} \frac{\mathbb{F}[U]}{U^n}\]
is the same as the map $v_s^n: \cf^n(S^3_NK,s) \to \cf^n(S^3)$. We show that if $v^-_{s,*}$ is not surjective, then neither is $v_{s,*}^n$ for sufficiently large $n$. By the universal coefficient theorem: 
\[
\begin{tikzcd}
0  \to H_*(C_1) \otimes \frac{\mathbb{F}[U]}{U^n} \arrow{d}{v^-_{s,*} \otimes \text{id} } \arrow{r}{i_1} & H_*(C_1 \otimes \frac{\mathbb{F}[U]}{U^n})  \arrow{d}{v_{s,*}^n} \arrow{r}{} & \text{Tor}(H_*C_1, \frac{\mathbb{F}[U]}{U^n} ) \arrow{d}{\text{Tor}(v_s^-)} \to 0 \\
0  \to  H_*(C_2) \otimes\frac{\mathbb{F}[U]}{U^n} \arrow{r}{i_2} & H_*(C_2 \otimes \frac{\mathbb{F}[U]}{U^n})  \arrow{r}{} & \text{Tor}(H_*C_2, \frac{\mathbb{F}[U]}{U^n} )  \to 0
\end{tikzcd}
\]
where all tensor products are taken over $\mathbb{F}[U]$. 

We note the following facts:
\begin{enumerate}
\item[$\bullet$] For a rational homology $3$-sphere $Y$, $\hf^-(Y, \s) / \{U \text{-torsion}\} = \mathfrak{T}^- = \mathbb{F}[U]$. So $H_*(C_1) = \mathfrak{T}^- \bigoplus (\oplus \frac{\mathbb{F}[U]}{U^{m_i}})$. 
\item[$\bullet$] $\mathfrak{T}^- \otimes \frac{\mathbb{F}[U]}{U^n} =\frac{\mathbb{F}[U]}{U^n}  $ and
$\frac{\mathbb{F}[U]}{U^{m_i}} \otimes \frac{\mathbb{F}[U]}{U^n} = \frac{\mathbb{F}[U]}{U^{m_i}}$. So 
$H_*(C_1) \otimes \frac{\mathbb{F}[U]}{U^n} = \frac{\mathbb{F}[U]}{U^n} \bigoplus (\oplus \frac{\mathbb{F}[U]}{U^{m_i}})$.
\item[$\bullet$] $\text{Tor}(\frac{\mathbb{F}[U]}{U^m}, \frac{\mathbb{F}[U]}{U^n} ) =\frac{\mathbb{F}[U]}{U^m}$ if $m < n$.
\item[$\bullet$] $H_*(C_2 \otimes \frac{\mathbb{F}[U]}{U^n}) =\frac{\mathbb{F}[U]}{U^n}$. 

\end{enumerate}
Assume $n$ is sufficiently large so that $m_i < n$ for all $m_i$. So the above Tor groups are $n-1$ torsion.

If $v^-_s$ is not surjective, we can further choose $n$ sufficiently large so that the image of $v^-_{s,*} \otimes \text{id} $ is $n-1$ $U-$torsion. 
By commutativity of the diagram, the image of $v_{s,*}^n \circ i_1$ is $n-1$ $U-$torsion. 

Suppose $\xi \in H_*(C_1 \otimes \frac{\mathbb{F}[U]}{U^n})$ such that $v_{s,*}^n(\xi)$ is an element of order $n$. Then since the short exact sequence in the universal coefficient theorem splits, $\xi =  \alpha + \beta$ where $\alpha \in H_*(C_1) \otimes \frac{\mathbb{F}[U]}{U^n}$ and $\beta \in  \text{Tor}(H_*C_1, \frac{\mathbb{F}[U]}{U^n} )$. But 
\[U^{n-1}\cdot v_{s,*}^n(\alpha + \beta) = v_{s,*}^n(U^{n-1}\alpha) +v_{s,*}^n (U^{n-1} \beta) = 0.\]

Since $H_*(C_2 \otimes \frac{\mathbb{F}[U]}{U^n}) = \frac{\mathbb{F}[U]}{U^n}$, $v_{s,*}^n$ is not surjective. Therefore, for sufficiently large $n$, $\nu_n(K) = \nu^+(K)$. 

Finally, by the mirroring property, $\nu_n(K) = \nu^{+'}(K)$ for sufficiently large negative integers $n$. 
\end{proof}

The fact that $\nu_n(K)$ are not concordance homomorphisms from $\mathcal{C}$ to $\mathbb{Z}$ can easily be seen. ($\nu_n(K)$ is not additive under connected sum of knots). For $n = 1$, just consider two knots with $\varepsilon(K) =  \varepsilon(K') = -1$. Then 
\begin{eqnarray*}
\nu(K) = \tau(K) + 1 & \text{ and } & \nu(K') = \tau(K') + 1
\end{eqnarray*}
but
\begin{eqnarray*}
\nu(K\#K') = \tau(K\#K') + 1 = \tau(K) + \tau(K') + 1 < \nu(K) + \nu(K').
\end{eqnarray*}

\section{Computations}\label{section-computations}
Knot Floer homology groups can be easily computed for certain special families of knots. For example, homologically thin knots are knots with $\hfkhat(K)$ supported in a single $\delta$-grading, where $\delta = A - M$. If the homology is supported on the diagonal $\delta = -\sigma(K)/2$, where $\sigma(K)$ denotes the knot signature, then we say the knot is $\sigma$-thin. The class of $\sigma$-thin knots contains as a proper subset all quasi-alternating knots, and in particular all alternating knots. The following theorem shows that $\nu_n(K)$ of thin knots only depends on $\tau(K)$. 
\begin{theorem} \label{thinknots}
Let $K$ be a homologically thin knot with $\tau(K) = \tau$.  
\begin{enumerate}
\item[(i)] If $\tau = 0$, $\nu_n(K) = 0$ for all $n$.
\item[(ii)] 
If $\tau > 0$, 
  \[
    \nu_n(K) = \begin{cases}
        0, & \text{for } n \leq -(\tau+1)/2,\\
        \tau + 2n + 1, & \text{for } -\tau/2 \leq n \leq -1,\\
        \tau, & \text{for } n \geq 0.
        \end{cases}
  \]
\item[(iii)] If $\tau < 0$, 
  \[
    \nu_n(K) =  \begin{cases}
        \tau, & \text{for } n \leq 0,\\
        \tau + 2n - 1, & \text{for } 1 \leq n \leq -\tau/2,\\
        0, & \text{for } n \geq (-\tau+1)/2.
        \end{cases}
  \]
\end{enumerate}
\begin{proof}
In \cite{Petkova} Petkova constructs model complexes for $\cfk^\infty(K)$ of homologically thin knots. She shows if $\tau(K) = \tau$, then the model chain complex contains a direct summand isomorphic to 
\[ \cfk^\infty(T_{2,2\tau+1}) \text{ if } \tau > 0, \ \  \text{ or } \ \ \cfk^\infty(T_{2,2\tau-1}) \text{ if } \tau \leq 0.\]
This summand supports $H_*(\cfk^\infty(K))$ and thus $\nu_n(T_{2,2\tau\pm1})$ determines the values of $\nu_n(K)$. 

Without loss of generality, assume $\tau(K) > 0$. 
The chain complex $\cfk^\infty(T_{2,2\tau+1})$ is generated over $\mathbb{F}[U, U^{-1}]$ by generators
$\{z_p\}_{p=1}^{2\tau+1}$ with $U$-filtration levels $i$ and Alexander filtration levels $j$ specified by (for all $1 \leq p \leq 2\tau+1$):
\begin{equation*}
\begin{split}
j(z_p) = \begin{cases}
\tau - \frac{p - 1}{2} & \text{ if } p \text{ odd}; \\
\tau - \frac{p - 2}{2} & \text{ if } p \text{ even}.
\end{cases}
\end{split}
\qquad \qquad
\begin{split}
i(z_p) = \begin{cases}
\frac{p-1}{2} & \text{ if } p \text{ odd}; 
 \\
\frac{p}{2} & \text{ if } p \text{ even}.
\end{cases}
\end{split}
\end{equation*}
\noindent
and differential:
\begin{eqnarray*}
\partial z_{p} = 
\begin{cases}
0, & \text{ if } p \text{ odd};
\\
z_{p-1} + z_{p+1}, & \text{ if } p \text{ even}.
\end{cases}
\end{eqnarray*}
The above complex with generators $\{z_p\}_{p=1}^{2\tau+1}$ and given differential maps forms the generating ``staircase" complex $C_\tau$, and $\cfk^\infty(T_{2,2\tau+1})$ is the tensor product of this staircase complex $C_\tau \otimes_\mathbb{F} \mathbb{F}[U, U^{-1}]$. The $U$-action lowers $i$ and $j$ by one. 
\begin{itemize}
\item[$\bullet$  ] \textbf{Computation of $\nu^+(T_{2,2\tau+1})$ and $\nu_n(T_{2,2\tau+1})$ for $n > 0$}. 
Since $C\{i < 0, j \geq \tau\} = 0$, the map $v_\tau^+$ is the same as the inclusion 
\[C\{0 \leq i \leq n - 1, j \leq \tau + n -1\} \to C\{0 \leq i \leq n-1\} = B^n.\]
Moreover the generator with the highest Alexander grading in $C\{0 \leq i \leq n-1\} $ is $U^{n-1}z_1$ with 
\[j(U^{n-1}z_1) = \tau+n-1.\]
Thus, $C\{0 \leq i \leq n-1, j > \tau + n - 1\} = 0$. 
That is, the inclusion $v_\tau^+$ is an isomorphism of chain complexes, so $\nu^+(K) = \tau(K)$. 
Therefore, $\nu_n(K) = \tau(K)$ for all $n \geq 0$. 

\item[$\bullet$] \textbf{Computation of $\nu^{+'}(T_{2,2\tau+1})$}. The homology of $B^+$ is generated by $\{[U^{-i}z_1]\}$ for all $i \geq 0$. The subquotient complex $A^{+'}_0$ contains $U^{-i}C_\tau$ for all $i \geq 0$, and the homology of $U^{-i}C_\tau$ is generated by the class $[U^{-i}z_1]$. Therefore, $v^{+'}_{0,*}[U^{-i}z_1] \neq 0$ in $H_*(A^{+'}_0)$, and $v^{+'}_{0,*}$ is injective. So $\nu^{+'}(T_{2,2\tau+1}) \geq 0$. But since $\nu^{+'}(K) \leq 0$ for any knot $K$, we conclude $\nu^{+'}(T_{2,2\tau+1}) = 0$. 
\end{itemize}

\begin{itemize}
\item[$\bullet$] \textbf{Computation of $\nu_n(T_{2,2\tau+1})$ for $ -\tau/2 \leq n \leq -1$.} Consider the subquotient complex $A^n_k$ where $k = \tau + 2n + 1$. For each $1 \leq p \leq 2 \tau + 1$: 
\begin{eqnarray*}
\min(i(z_p),j(z_p)-k) =&
\begin{cases}
\min(\frac{p-1}{2}, -\frac{p-1}{2} - 2n -1) & \text{ if } p \text{ is odd};
\\
\min(\frac{p}{2}, -\frac{p-2}{2} - 2n -1)  &  \text{ if } p \text{ is even}.
\end{cases}
\\
=&
\begin{cases}
\frac{p-1}{2} & \text{ if } p \text{ is odd and }  p \leq -2n-1;
\\
-\frac{p-1}{2} - 2n - 1 & \text{ if } p \text{ is odd and }  p > -2n;
\\
\frac{p}{2}  &  \text{ if } p \text{ is even and } p \leq -2n;
\\
 -\frac{p-2}{2} - 2n -1 &  \text{ if } p \text{ is even and } p > -2n.
\end{cases}
\end{eqnarray*}
Using these formulas, it is straightforward to check that $A^n_k$ contains $z_p$ for $1 \leq p \leq -2n-1$ but $z_{-2n} \notin A^n_k$.
Therefore, $[z_1] \neq 0$ in $A^n_k$. Similarly, for $1\leq i \leq -n-1$, $A^n_k$ contains $U^{-i}z_p$ for $1 \leq p \leq -2(n+i)-1$ but $z_{-2(n+i)} \notin A^n_k$. Therefore, $[U^{-i}z_1] \neq 0$ in $H_*(A^n_k)$. Since $H_*(B^n)$ is generated by $[U^{-i}z_1]$ for $0\leq i \leq -n-1$, $v^n_k$ is injective on homology.

To check that $\nu_n(T_{2,2\tau+1}) = \tau+2n+1$, consider the subquotient complex $A^n_{\tau+2n+2}$. For each $1 \leq p \leq 2 \tau + 1$:
\begin{eqnarray*}
\min(i(z_p),j(z_p)-k) =&
\begin{cases}
\min(\frac{p-1}{2}, -\frac{p-1}{2} - 2n -2) & \text{ if } p \text{ is odd};
\\
\min(\frac{p}{2}, -\frac{p-2}{2} - 2n -2)  &  \text{ if } p \text{ is even}.
\end{cases}
\\
=&
\begin{cases}
\frac{p-1}{2} & \text{ if } p \text{ is odd and }  p \leq -2n-1;
\\
-\frac{p-1}{2} - 2n - 2 & \text{ if } p \text{ is odd and }  p > -2n;
\\
\frac{p}{2}  &  \text{ if } p \text{ is even and } p \leq -2n-2;
\\
 -\frac{p-2}{2} - 2n -2 &  \text{ if } p \text{ is even and } p \geq -2n.
\end{cases}
\end{eqnarray*}
Using the above, it is straightforward to check that $A^n_{\tau+2n+2}$ contains $z_p$ for $1 \leq p \leq -4n-2$ but $z_{-4n-1} \notin A^n_{\tau+2n+2}$.
Therefore, $[z_1] = 0 $ in $H_*(A^n_{\tau+2n+2})$. Thus, $\nu_n(T_{2,2\tau+1}) = \tau+2n+1$.

\item[$\bullet$] \textbf{Computation of $\nu_n(T_{2,2\tau+1})$ for $ n \leq -(\tau+1)/2$.} Consider $A^{n'}_0$ where 
\begin{eqnarray*}
n' = \begin{cases}
-\frac{\tau+1}{2} & \text{ if } \tau \text{ is odd};
\\
-\frac{\tau}{2} - 1 & \text{ if } \tau \text{ is even}.
\end{cases}
\end{eqnarray*}
For each $1 \leq p \leq 2 \tau + 1$: 
\begin{eqnarray*}
\min(i(z_p),j(z_p)-0) =&
\begin{cases}
\min(\frac{p-1}{2}, \tau -\frac{p-1}{2} ) & \text{ if } p \text{ is odd};
\\
\min(\frac{p}{2}, \tau -\frac{p-2}{2} )  &  \text{ if } p \text{ is even}.
\end{cases}
\\
=&
\begin{cases}
\frac{p-1}{2} & \text{ if } p \text{ is odd and }  p \leq \tau+1;
\\
\tau-\frac{p-1}{2}  & \text{ if } p \text{ is odd and }  p > \tau+1;
\\
\frac{p}{2}  &  \text{ if } p \text{ is even and } p \leq \tau+1;
\\
\tau -\frac{p-2}{2}  &  \text{ if } p \text{ is even and } p > \tau+1.
\end{cases}
\end{eqnarray*}
These computations show that $v^{n'}_0$ is injective on homology:
\begin{itemize}
\item
If $\tau$ is odd, $A^{n'}_0$ contains $z_p$ for $1 \leq p \leq -2n-1$ but $z_{-2n} \notin A^{n'}_0$. Similarly, for $1\leq i \leq -n-1$, $A^{n'}_0$ contains $U^{-i}z_p$ for $1 \leq p \leq -2(n+i)-1$ but $z_{-2(n+i)} \notin A^{n'}_0$. Therefore, $[U^{-i}z_1] \neq 0$ in $H_*(A^{n'}_0)$ for $0 \leq i \leq -n-1$. So $v^{n'}_0$ is injective on homology.
\item
If $\tau$ is even, $A^{n'}_0$ contains all $z_p$ for $1 \leq p \leq 2\tau+1$. Furthermore, for $1\leq i \leq -n-1$, $A^{n'}_0$ contains $U^{-i}z_p$ for $1 \leq p \leq -2(n+i)-1$ but $z_{-2(n+i)} \notin A^{n'}_0$. Therefore, $[U^{-i}z_1] \neq 0$ in $H_*(A^{n'}_0)$ for $0 \leq i \leq -n-1$. So $v^{n'}_0$ is injective on homology.
\end{itemize}
Since $\nu^{+'}(T_{2,2\tau+1}) = 0$ is a lower bound on $\nu_n(T_{2,2\tau+1})$, we conclude that $\nu_n(T_{2,2\tau+1}) = 0$ for all $n \leq -\frac{\tau+1}{2}$.
\end{itemize}
\end{proof}
\end{theorem}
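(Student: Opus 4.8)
The plan is to reduce the whole computation to the torus knots $T_{2,2\tau\pm1}$ and then analyze their staircase complexes directly. Because the surgery formulas of Propositions~\ref{negsurgeryformula} and~\ref{possurgeryformula} express $\nu_n$ entirely in terms of $\cfk^\infty$, and because the summand of $\cfk^\infty(K)$ complementary to the staircase $C_\tau\otimes_{\mathbb{F}}\mathbb{F}[U,U^{-1}]$ carries no homology, the induced maps $v^n_s$ for $K$ agree on homology with those for $T_{2,2\tau+1}$ (when $\tau>0$) or $T_{2,2\tau-1}$ (when $\tau\le 0$); hence $\nu_n(K)=\nu_n(T_{2,2\tau\pm1})$. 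I would dispose of (i) first: for $\tau=0$ the staircase degenerates to the unknot complex $\mathbb{F}[U,U^{-1}]$, every relevant map at $s=0$ is an isomorphism, and $\nu_n\equiv 0$. I would then prove (ii) in full and obtain (iii) for free from the mirroring property $\nu_n(-K)=-\nu_{-n}(K)$: if $\tau(K)<0$ then $-K$ is thin with $\tau(-K)=-\tau>0$, and substituting the formula of (ii) for $-K$ into $\nu_n(K)=-\nu_{-n}(-K)$ reproduces (iii) term by term.

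So assume $\tau>0$. I would first record the staircase data: generators $z_1,\dots,z_{2\tau+1}$ with the stated $(i,j)$-filtration levels, $\partial z_p=z_{p-1}+z_{p+1}$ for $p$ even and $\partial z_p=0$ for $p$ odd, so that $H_*(\cfk^\infty)=\mathbb{F}[U,U^{-1}]$ is generated by $[z_1]=[z_3]=\cdots=[z_{2\tau+1}]$. For $n\ge 0$ the argument is structural: since $C\{i<0,\,j\ge\tau\}=0$ and the highest Alexander-graded generator of $B^n$ still lies in the image, the map $v^+_\tau$ (and likewise $v^n_\tau$) is an isomorphism of chain complexes, so $\nu^+(T_{2,2\tau+1})=\tau$. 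Monotonicity together with Boundedness then pins down $\tau=\nu_0\le\nu_n\le\nu^+=\tau$ for all $n\ge 0$. In the same spirit I would compute $\nu^{+'}(T_{2,2\tau+1})=0$: each tower class $[U^{-a}z_1]$ survives in $H_*(A_0^{+'})$, so $v^{+'}_{0,*}$ is injective, while $\nu^{+'}\le 0$ always holds. This value is exactly the lower bound needed for the negative-$n$ range via Boundedness.

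The heart of the proof is the range $-\tau/2\le n\le -1$. Setting $k=\tau+2n+1$, I would compute $\min\bigl(i(U^{-a}z_p),\,j(U^{-a}z_p)-k\bigr)$ for each generator and each $U$-power and read off which elements lie in the window $A^n_k=C\{0\le\min(i,j-k)\le|n|-1\}$. The combinatorics show that $A^n_k$ contains $z_1,\dots,z_{-2n-1}$ but not $z_{-2n}$, so the top surviving odd class is $[z_{-2n-1}]=[z_1]$, which is nonzero; since $H_*(B^{|n|})\cong\mathbb{F}[U]/U^{|n|}$ is cyclic with socle $[z_1]$, the survival of this class already forces $v^n_k$ to be injective, giving $\nu_n\ge k$. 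For the matching upper bound I would run the identical membership count with $k$ replaced by $k+1=\tau+2n+2$: now the window reaches the even generator $z_{-4n-2}$ while $z_{-4n-1}$ is excluded, so its truncated differential $\partial z_{-4n-2}=z_{-4n-3}$ bounds the top odd class; as $[z_1]$ equals that class, $[z_1]=0$ in $H_*(A^n_{k+1})$, $v^n_{k+1}$ fails to be injective, and $\nu_n\le k$. Finally, for $n\le-(\tau+1)/2$ the bound $\nu_n\ge\nu^{+'}=0$ is immediate, and for the reverse inequality it suffices by Monotonicity to treat the threshold value $n'$ just below the previous range: the same membership computation shows $z_1,\dots,z_{2\tau}$ lie in $A^{n'}_1$ while $z_{2\tau+1}$ drops out, so the truncated differential kills the top odd class and $[z_1]=0$ in $H_*(A^{n'}_1)$, whence $v^{n'}_1$ is not injective and $\nu_n\le\nu_{n'}=0$.

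I expect the main obstacle to be the bookkeeping in the third paragraph: correctly determining, across the four parity cases for $p$ and the two parities of $\tau$, exactly which elements $U^{-a}z_p$ satisfy $0\le\min(i,j-k)\le|n|-1$, and then tracking the truncated differential to decide whether the top odd class (and hence $[z_1]$) survives or bounds. Keeping the windows, the $U$-shifts, and the endpoints $z_{-2n}$ versus $z_{-2n-1}$ mutually consistent is precisely where an off-by-one error would most easily appear, and it is also where the thresholds $\tau+2n+1$ and $-(\tau+1)/2$ are genuinely produced; I would also take care that the upper bound at very negative $n$ (the step $v^{n'}_1$ not injective) is verified explicitly, since the lower bound $\nu^{+'}=0$ alone does not pin the value down.
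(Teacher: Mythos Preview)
Your proposal is correct and follows essentially the same strategy as the paper: reduce to the staircase $C_\tau$ via Petkova, handle $n\ge 0$ by observing that $v^n_\tau$ is an isomorphism of chain complexes, compute $\nu^{+'}=0$ to pin down the lower bound, and for the middle range $-\tau/2\le n\le -1$ perform the explicit membership count in $A^n_k$ and $A^n_{k+1}$ to obtain both inequalities; the mirroring reduction for (iii) is exactly what the paper means by ``without loss of generality, assume $\tau>0$.''

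There are two places where your argument differs slightly from the paper's, and in each case your version is a bit cleaner. First, for injectivity of $v^n_k$ you use the $U$-equivariance together with cyclicity of $H_*(B^{|n|})\cong\mathbb{F}[U]/U^{|n|}$: survival of the socle $[z_1]$ forces injectivity. The paper instead checks directly that each $[U^{-i}z_1]$, $0\le i\le |n|-1$, is nonzero in $H_*(A^n_k)$; your shortcut is valid and avoids that case-by-case check. Second, for the range $n\le -(\tau+1)/2$ the paper verifies that $v^{n'}_0$ is injective and then invokes the bound $\nu^{+'}=0$, but both of these only give $\nu_{n'}\ge 0$; the complementary inequality $\nu_{n'}\le 0$ is not argued there explicitly. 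Your plan to show $v^{n'}_1$ is \emph{not} injective (because $z_1,\dots,z_{2\tau}\in A^{n'}_1$ while $z_{2\tau+1}\notin A^{n'}_1$, so $\partial z_{2\tau}=z_{2\tau-1}$ kills $[z_1]$) supplies exactly the missing half, and then Monotonicity and Boundedness finish the range. This is a genuine improvement in exposition, and your closing remark that ``the lower bound $\nu^{+'}=0$ alone does not pin the value down'' is well taken.
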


\begin{proposition} If $K$ is strongly quasipositive, then $\nu_n(K) = \tau(K) = g_4(K) = g(K)$ for all positive $n$. 
\label{strongqp}
\begin{proof}
If $K$ is strongly quasipositive, then $\nu^+(K) = \tau(K) = g_4(K) = g(K)$ by \cite{Hom-Wu}. The result immediately follows since $\tau(K) \leq \nu_n(K) \leq \nu^+(K)$ for positive $n$.
\end{proof}
\end{proposition}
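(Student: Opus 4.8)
The plan is to deduce the statement by a squeezing argument, combining the known behavior of the classical invariants on strongly quasipositive knots with the Monotonicity and Boundedness results established earlier in this section. Essentially all of the work is external; the role of the present proposition is to transfer that work to the family $\nu_n$ via the ordering properties already proven.

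First I would recall the relevant input. For a strongly quasipositive knot $K$, the equalities $\nu^+(K) = \tau(K) = g_4(K) = g(K)$ hold; this is precisely the content invoked from \cite{Hom-Wu} (building on the fact that $\tau$ computes the genus of such knots, together with the general inequalities $\tau(K) \le g_4(K) \le g(K)$). Taking this as given, the task reduces to locating $\nu_n(K)$ for positive $n$ between $\tau(K)$ and $\nu^+(K)$.

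Next I would assemble the two bracketing inequalities. For the upper bound I would apply the Boundedness proposition, which gives $\nu_n(K) \le \nu^+(K)$ for every $n$. For the lower bound I would use the Monotonicity proposition together with the convention $\nu_0(K) = \tau(K)$: the chain $\tau(K) = \nu_0(K) \le \nu_1(K) \le \cdots \le \nu_n(K)$ yields $\tau(K) \le \nu_n(K)$ for all $n \ge 0$. Together these give $\tau(K) \le \nu_n(K) \le \nu^+(K)$ for positive $n$.

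Finally, since $\nu^+(K) = \tau(K)$ for strongly quasipositive $K$, the upper and lower bounds coincide and force $\nu_n(K) = \tau(K)$ for all positive $n$; appending the identities $\tau(K) = g_4(K) = g(K)$ then produces the full chain of equalities in the statement. I expect no genuine obstacle in the truncated setting: once the external result on strongly quasipositive knots is in hand the argument is purely formal, and the squeeze is insensitive to the detailed structure of the truncated complexes, relying only on the already-established ordering of the $\nu_n$.
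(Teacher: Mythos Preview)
Your proposal is correct and follows exactly the approach of the paper: invoke the Hom--Wu result that $\nu^+(K)=\tau(K)=g_4(K)=g(K)$ for strongly quasipositive $K$, then squeeze $\nu_n(K)$ between $\tau(K)$ and $\nu^+(K)$ using the established ordering of the $\nu_n$. The only cosmetic difference is that you spell out the lower bound via Monotonicity from $\nu_0(K)=\tau(K)$, whereas the paper simply asserts $\tau(K)\le\nu_n(K)\le\nu^+(K)$ for positive $n$.
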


\example
\label{ex:T2,2N+1}
 Figure \ref{fig:T2,9}(a) shows the knot Floer chain complex $\cfk^\infty$ of the $(2, 9)$-torus knot. The computation of $\nu_{-2}(T_{2,9})$ is shown in  Figure \ref{fig:T2,9}(b)-(c). 
\begin{eqnarray*}
 \nu_n(T_{2,9}) =   \begin{cases}
 4, \text{ for all } n \geq 0 \\
 3, \text{ for } n = -1 \\
 1, \text{ for } n = -2 \\
 0, \text{ for all } n \leq -3
 \end{cases}
 \end{eqnarray*}

\begin{figure}[thb!]
\centering 
\subfigure[Generating complex for $\cfk^\infty$ of the (2, 9)-torus knot $T_{2,9}$]{%
\begin{tikzpicture}
	\draw[step=1, black!30!white, very thin] (-.9, -.9) grid (4.9, 4.9);
	
	\begin{scope}[thin, black!60!white]
		\draw [<->] (-1, 0) -- (5, 0);
		\draw [<->] (0, -1) -- (0, 5);
	\end{scope}	
	 \useasboundingbox (-0.5, -0.5) rectangle (5, 5);
        
        \filldraw (4, 0) circle (1.5pt) node[] (a){};
        \filldraw (4, 1) circle (1.5pt) node[] (b){};
        \filldraw (3, 1) circle (1.5pt) node[] (c){};
        \filldraw (3, 2) circle (1.5pt) node[] (d){};
        \filldraw (2, 2) circle (1.5pt) node[] (e){};
        \filldraw (2, 3) circle (1.5pt) node[] (f){};
        \filldraw (1, 3) circle (1.5pt) node[] (g){};
        \filldraw (1, 4) circle (1.5pt) node[] (h){};
        \filldraw (0, 4) circle (1.5pt) node[] (i){};
        \node [right] at (a) {$z_9$};
        \node [right] at (b) {$z_8$};
        \node [left] at (c) {$z_7$};
        \node [right] at (d) {$z_6$};
        \node [left] at (e) {$z_5$};
        \node [right] at (f) {$z_4$};        
        \node [left] at (g) {$z_3$};
        \node [right] at (h) {$z_2$};
        \node [left] at (i) {$z_1$};
        
	\draw [very thick, <-] (a) -- (b);
	\draw [very thick, <-] (c) -- (b);
	\draw [very thick, <-] (c) -- (d);
	\draw [very thick, <-] (e) -- (d);
	\draw [very thick, <-] (g) -- (f);
	\draw [very thick, <-] (e) -- (f);
	\draw [very thick, <-] (i) -- (h);
	\draw [very thick, <-] (g) -- (h);
\end{tikzpicture}
}
\hspace{1cm}
\subfigure[The classes $\lbrack U^{-1}z_1\rbrack$ and $\lbrack z_1\rbrack= \lbrack z_3\rbrack$ generate $\hf^2(S^3)$]{%
\begin{tikzpicture}
	\draw[step=1, black!30!white, very thin] (-.9, -.9) grid (5.5, 5.5);
	
	\begin{scope}[thin, black!60!white]
		\draw [<->] (-1, 0) -- (5.5, 0);
		\draw [<->] (0, -1) -- (0, 5.5);
	\end{scope}
	
	\filldraw[black!20!white, pattern=north west lines,] (0, -.9) rectangle (1, 5.4);
	
	 \useasboundingbox (-0.5, -0.5) rectangle (5.5, 5.5);
	 
        \filldraw (1, 5) circle (1.5pt) node[] (upper){};
        \filldraw (1, 3) circle (1.5pt) node[] (g){};
        \filldraw (1, 4) circle (1.5pt) node[] (h){};
        \filldraw (0, 4) circle (1.5pt) node[] (i){};     
        \node [right] at (g) {$z_3$};
        \node [right] at (h) {$z_2$};
        \node [left] at (i) {$z_1$};
        \node [right] at (upper) {$U^{-1}z_1$};
	\draw [very thick, <-] (i) -- (h);
	\draw [very thick, <-] (g) -- (h);
\end{tikzpicture}
}
\hspace{1cm}
\subfigure[The classes $\lbrack z_1 \rbrack$ and $\lbrack z_3 \rbrack$ vanish in $\hf^2(S^3_{-N}K, \text{[}2\text{]})$ ]{
\begin{tikzpicture}
	\draw[step=1, black!30!white, very thin] (-.4, -.4) grid (4.9, 5.4);
	
	\begin{scope}[thin, black!60!white]
		\draw [<->] (-.5, 0) -- (5, 0);
		\draw [<->] (0, -.5) -- (0, 5.5);
	\end{scope}
	
	\filldraw[black!20!white, pattern=north west lines,] (4.9, 3) rectangle (0, 2);
	\filldraw[black!20!white, pattern=north west lines,] (1, 5.4) rectangle (0, 2);

	 \useasboundingbox (-0.5, -0.5) rectangle (5.5, 5.5);
        
        \filldraw (1, 5) circle (1.5pt) node[] (upper){};
        \filldraw (3, 2) circle (1.5pt) node[] (d){};
        \filldraw (2, 2) circle (1.5pt) node[] (e){};
        \filldraw (2, 3) circle (1.5pt) node[] (f){};
        \filldraw (1, 3) circle (1.5pt) node[] (g){};
        \filldraw (1, 4) circle (1.5pt) node[] (h){};
        \filldraw (0, 4) circle (1.5pt) node[] (i){};       

        \node [right] at (d) {$z_6$};
        \node [left] at (e) {$z_5$};
        \node [right] at (f) {$z_4$};        
        \node [left] at (g) {$z_3$};
        \node [right] at (h) {$z_2$};
        \node [left] at (i) {$z_1$};
        \node [right] at (upper) {$U^{-1}z_1$};
	\draw [very thick, <-] (e) -- (d);
	\draw [very thick, <-] (g) -- (f);
	\draw [very thick, <-] (e) -- (f);
	\draw [very thick, <-] (i) -- (h);
	\draw [very thick, <-] (g) -- (h);
\end{tikzpicture}
}
\hspace{1cm}
\subfigure[The classes $\lbrack z_1 \rbrack = \lbrack z_3\rbrack$ and $\lbrack U^{-1}z_1 \rbrack$ survive in $\hf^2(S^3_{-N}K, \text{[}1\text{]})$]{
\begin{tikzpicture}
	\draw[step=1, black!30!white, very thin] (-.4, -.4) grid (4.9, 5.5);
	
	\begin{scope}[thin, black!60!white]
		\draw [<->] (-.5, 0) -- (5, 0);
		\draw [<->] (0, -.5) -- (0, 5.5);
	\end{scope}
	
	\filldraw[black!20!white, pattern=north west lines,] (4.9, 2) rectangle (0, 1);
	\filldraw[black!20!white, pattern=north west lines,] (1, 5.4) rectangle (0, 1);

	 \useasboundingbox (-0.5, -0.5) rectangle (5.5, 5.5);
        \filldraw (1, 5) circle (1.5pt) node[] (upper){};
        \filldraw (4, 1) circle (1.5pt) node[] (b){};
        \filldraw (3, 1) circle (1.5pt) node[] (c){};
        \filldraw (3, 2) circle (1.5pt) node[] (d){};
        \filldraw (2, 2) circle (1.5pt) node[] (e){};
        \filldraw (1, 3) circle (1.5pt) node[] (g){};
        \filldraw (1, 4) circle (1.5pt) node[] (h){};
        \filldraw (0, 4) circle (1.5pt) node[] (i){};     
        \node [right] at (b) {$z_8$};
        \node [left] at (c) {$z_7$};  
        \node [right] at (d) {$z_6$};
        \node [left] at (e) {$z_5$};
        \node [left] at (g) {$z_3$};
        \node [right] at (h) {$z_2$};
        \node [left] at (i) {$z_1$};
        \node [right] at (upper) {$U^{-1}z_1$};
	\draw [very thick, <-] (e) -- (d);
	\draw [very thick, <-] (c) -- (b);
	\draw [very thick, <-] (c) -- (d);
	\draw [very thick, <-] (i) -- (h);
	\draw [very thick, <-] (g) -- (h);
\end{tikzpicture}
}
\caption[]{} 
\label{fig:T2,9}
\end{figure}

The computation of $\nu_n(K)$ for thin knots show that the sequence $\nu_n$ can increase by more than one at a time, in contrast to the local $h$-invariants defined by Rasmussen, which jump by at most one (Proposition 7.6, \cite{rasmussen}). 

In fact, the gap between $\nu_n(K)$ and $\nu_{n+1}(K)$ can be arbitrarily big. For example, a straightforward (partial) computation of $\nu_n(T_{p,p+1})$ using $\cfk^\infty(T_{p,p+1})$ shows that for $p > 3$,
\[\nu_{-1}(T_{p, p+1}) - \nu_{-2}(T_{p,p+1}) = p.\]

\begin{theorem}
Let $T_{p, p+1}$ denote the $(p, p+1)$ torus knot for $p > 3$. Let $\tau = \tau(T_{p,p+1}) = \frac{(p-1)p}{2}$.
\begin{eqnarray*}
\nu_n(T_{p, p+1}) = \begin{cases}
\tau & \text{for }  n \geq 0 \\
\tau - 1 & \text{for } n = -1 \\
\tau - 1-p & \text{for } n = -2 \\
\end{cases}
\end{eqnarray*}
Thus, $\nu_{-1}(T_{p, p+1}) - \nu_{-2}(T_{p,p+1}) = p .$
\begin{proof}
In \cite{samanthaallen}, Allen shows that the staircase model chain complex for $\cfk^\infty(T_{p,p+1})$ takes the form 
\[ \lbrack 1,\ p-1, \ 2,\ p-2, \dots,\ j,\ p-j,\ \dots,\ p-1,\ 1\rbrack,\]
where the indices alternate between the widths of the horizontal and vertical steps. 
From this staircase description, there exists a $(i,j)$-filtered basis for $\cfk^\infty(T_{p,p+1})$ consisting of generators $\{b_l\}_{l=0}^{2(p-1)}$ lying in $(i,j)$-filtrations:
\begin{eqnarray*}
b_{2m}: && (\sum_{k=1}^{m}k, \frac{(p-1)p}{2}-\sum_{k=1}^{m}(p-k)),
\\
b_{2m+1}: && (\sum_{k=1}^{m+1}k, \frac{(p-1)p}{2}-\sum_{k=1}^{m}(p-k)),
\end{eqnarray*}
 and differential:
\begin{eqnarray*}
\partial b_{2m} &=& 0, 
\\
\partial b_{2m+1} &=& b_{2m}+b_{2m+2}.
\end{eqnarray*}
The same argument for showing that $\nu^+(T_{2,2\tau+1}) = \tau(T_{2,2\tau+1})$ in Theorem~\ref{thinknots} holds for the knots $T_{p,p+1}$.  Moreover, in the terminology of \cite{epsilon}, the basis $\{b_l\}_{l=0}^{2(p-1)}$ satisfies:
\begin{itemize}
\item $b_0$ is the vertically distinguished element of a vertically simplified basis.
\item $b_0$ has a unique incoming horizontal arrow (from $b_1$) (and no outgoing horizontal arrows).
\end{itemize}
We immediately conclude that $\varepsilon(T_{p,p+1}) = 1$ and $\nu_{-1}(T_{p,p+1}) = \tau - 1$.

To show $\nu_{-2}(T_{p,p+1}) = \tau-p-1$, we observe:
\begin{itemize}
\item $A_{\tau-p-1}^{-2}$ contains the generators $b_0, b_1, b_2$, but $b_3 \notin A_{\tau-p-1}^{-2}$. Therefore, $[b_0] \neq 0 $ in $H_*(A_{\tau-p-1}^{-2})$. Moreover, $[U^{-1}b_0] \neq 0 $ in $H_*(A_{\tau-p-1}^{-2})$. Thus $v_{\tau-p-1}^{-2}$ is injective on homology. 
\item $A_{\tau-p}^{-2}$ contains the generators $b_0, b_1, b_2, b_3$, but $b_4 \notin A_{\tau-p}^{-2}$. Therefore, $[b_0] = 0 $ in $H_*(A_{\tau-p}^{-2})$. 
\end{itemize}
\end{proof}
\end{theorem}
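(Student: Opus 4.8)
The plan is to translate each value $\nu_n(T_{p,p+1})$ into a survival-or-vanishing question for explicit homology classes in the subquotient complexes $A^n_s$ and $B^n$ attached to Allen's staircase model for $\cfk^\infty(T_{p,p+1})$ (via the surgery formulas of Propositions~\ref{negsurgeryformula} and~\ref{possurgeryformula}). First I would record the staircase generators $\{b_l\}_{l=0}^{2(p-1)}$ with their $(i,j)$-filtrations and differential $\partial b_{2m}=0$, $\partial b_{2m+1}=b_{2m}+b_{2m+2}$, paying special attention to the corners $b_0=(0,\tau)$, $b_1=(1,\tau)$, $b_2=(1,\tau-(p-1))$, $b_3=(3,\tau-(p-1))$, and $b_4=(3,\tau-(2p-3))$. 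The tall vertical step of height $p-1$ joining $b_1$ to $b_2$, together with the short horizontal step of width $2$ joining $b_2$ to $b_3$, is precisely what will produce the gap of size $p$.

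For $n\geq 0$ the answer is immediate: positive torus knots are strongly quasipositive, so Proposition~\ref{strongqp} gives $\nu_n(T_{p,p+1})=\tau$ for all $n>0$, and $\nu_0=\tau$ by definition. (Alternatively, since $b_0$ attains the maximal Alexander grading at $i=0$ and $C\{i<0,\,j\geq\tau\}=0$, the argument used for $T_{2,2\tau+1}$ in Theorem~\ref{thinknots} shows directly that $v^n_\tau$ is an isomorphism.) For $n=-1$ I would note that $\{b_l\}$ is a vertically simplified basis whose vertically distinguished element $b_0$ has a unique incoming horizontal arrow (from $b_1$) and no outgoing one, so the criterion of \cite{epsilon} yields $\varepsilon(T_{p,p+1})=1$ and hence $\nu_{-1}=\nu'=\tau-1$. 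Equivalently, one computes $v^{-1}_s\colon \hfhat(S^3)\to H_*(A^{-1}_s)$ directly: for $s<\tau$ the element $b_1\notin A^{-1}_s$, so $[b_0]\ne 0$ and the map is injective, while at $s=\tau$ the generator $b_1$ enters, $b_2\notin A^{-1}_\tau$, and the projected differential gives $\partial b_1=b_0$, killing $[b_0]$; thus $\nu_{-1}=\tau-1$.

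The heart of the argument, and the main obstacle, is the $n=-2$ case. Here $H_*(B^2)=\HF^2(S^3)$ is two-dimensional, generated by $[b_0]$ and $[U^{-1}b_0]$, and $\nu_{-2}=\max\{s:\,v^{-2}_s\text{ is injective on }H_*\}$. I would determine membership of the relevant generators (and their $U^{-1}$-translates) in $A^{-2}_s=C\{0\leq\min(i,j-s)\leq 1\}$, i.e.\ the conditions $i\geq 0$, $j\geq s$, and $(i\leq 1\text{ or }j\leq s+1)$. For $s=\tau-p-1$ one checks $b_0,b_1,b_2\in A^{-2}_s$ but $b_3\notin A^{-2}_s$; since $\partial b_1=b_0+b_2$ and no generator of $A^{-2}_s$ maps onto the cycle $b_2$, we get $[b_0]=[b_2]\ne 0$, and the parallel computation one $U$-level up (using $U^{-1}b_1\notin A^{-2}_s$) gives $[U^{-1}b_0]\ne 0$, so $v^{-2}_{\tau-p-1}$ is injective and $\nu_{-2}\geq\tau-p-1$. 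For $s=\tau-p$ the generator $b_3$ now just enters (its coordinate $j=\tau-p+1=s+1$ meets the boundary condition) while $b_4\notin A^{-2}_s$ for $p>3$; the projected differential then reads $\partial b_3=b_2$, so $[b_2]=0$ and therefore $[b_0]=[b_2]=0$, showing $v^{-2}_{\tau-p}$ is not injective, whence $\nu_{-2}\leq\tau-p-1$. Combining the two bounds gives $\nu_{-2}=\tau-p-1$ and the gap $\nu_{-1}-\nu_{-2}=p$.

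The delicate points I expect to consume the most care are, first, correctly locating the threshold at which $b_3$ enters $A^{-2}_s$: this is dictated by the Alexander coordinate $j(b_3)=\tau-(p-1)$, so $b_3$ appears exactly when $s+1\geq\tau-(p-1)$, i.e.\ at $s=\tau-p$, and it is this dependence on the height $p-1$ of the long vertical step that injects the factor of $p$ into the answer. Second, at $s=\tau-p-1$ it is not enough to see $[b_0]\ne 0$; I must verify that $[b_0]$ and $[U^{-1}b_0]$ remain linearly independent in $H_*(A^{-2}_s)$, which holds because they lie in distinct Maslov gradings, so that genuine injectivity (and not merely nonvanishing of the bottom class) is established.
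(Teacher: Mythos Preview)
Your proof is correct and follows essentially the same approach as the paper: both use Allen's staircase model for $\cfk^\infty(T_{p,p+1})$, invoke the $\varepsilon$-criterion for $n=-1$, and for $n=-2$ check membership of $b_0,\dots,b_4$ in $A^{-2}_s$ at $s=\tau-p-1$ and $s=\tau-p$ to see exactly when $[b_0]$ dies. Your version is in fact a bit more explicit than the paper's (writing out the coordinates of the $b_l$, verifying the membership conditions, offering the strongly-quasipositive shortcut for $n\ge 0$ and a direct computation for $n=-1$, and noting that linear independence of $[b_0]$ and $[U^{-1}b_0]$ follows from the Maslov grading), but the underlying argument is the same.
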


A natural question about the invariants $\nu_n(K)$ we can answer is:
\begin{question}
Do $\{\nu_n(K)\}$ contain more concordance information than the collection $\{\tau$, $\nu$, $\nu'$, $\nu^+$, $\nu^{+'}\}$? 
\end{question}
A positive answer to this question exists, illustrated in the following example. 
\example
\label{ex:T4,5}
The torus knot $T_{4,5}$ and the torus knot $T_{2, 13}$ share the following equal invariants:
\begin{eqnarray*}
\nu^{+'}(T_{4,5}) &= \ 0 \  =& \nu^{+'}(T_{2, 13}),
 \\
\nu'(T_{4,5})  &= \ 5\  = & \nu'(T_{2, 13}) ,
\\
\tau(T_{4,5})  = \nu(T_{4,5})  = \nu^+(T_{4,5})  &= \ 6\ =& \tau(T_{2, 13}) = \nu(T_{2, 13}) = \nu^+(T_{2, 13}) . 
\end{eqnarray*}
However, the invariants $\nu_n(T_{4,5})$ are different from $\nu_n(T_{2,13})$: 
\begin{equation*}
\begin{split}
\nu_n(T_{4,5}) = \begin{cases}
6, \text{ for } n \geq 0 \\
5, \text{ for } n = -1 \\
1, \text{ for } n = -2 \\
0, \text{  for } n \leq -3
\end{cases}
\end{split}
\qquad \qquad
\begin{split}
\nu_n(T_{2,13}) = \begin{cases}
6, \text{ for } n \geq 0 \\
5, \text{ for } n = -1 \\
3, \text{ for } n = -2 \\
1, \text{ for } n = -3 \\
0, \text{  for } n \leq -4
\end{cases}
\end{split}
\end{equation*}
where $\nu_n(T_{4,5})$ is computed from the knot Floer chain complex $\cfk^\infty(T_{4,5})$ as shown in Figure \ref{fig:T4,5}.
Comparing the computations of $\nu_n(K)$ for $T_{4,5}$ and $T_{2,13}$, we see that $\nu_n(K)$ contain more concordance information than the set of invariants $\{\tau$, $\nu$, $\nu'$, $\nu^+$, $\nu^{+'}\}$. 

\begin{figure}[thb!]
\centering
\begin{tikzpicture} 
	\draw[step=.5, black!30!white, very thin] (-.9, -.9) grid (3.4, 3.4);
	
	\begin{scope}[thin, black!60!white]
		\draw [<->] (-1, 0) -- (3.5, 0);
		\draw [<->] (0, -1) -- (0, 3.5);
	\end{scope}
	
	 \useasboundingbox (-0.5, -0.5) rectangle (3.5, 3.5);
        
        \filldraw (3, 0) circle (1.5pt) node[] (a){};
        \filldraw (3, .5) circle (1.5pt) node[] (b){};
        \filldraw (1.5, .5) circle (1.5pt) node[] (c){};
	\filldraw (1.5, 1.5) circle (1.5pt) node[] (d){};
        \filldraw (.5, 1.5) circle (1.5pt) node[] (e){};
        \filldraw (.5, 3) circle (1.5pt) node[] (f){};
        \filldraw (0, 3) circle (1.5pt) node[] (g){};
        \node [below] at (a) {$b_6$};
        \node [right] at (b) {$b_5$};
        \node [left] at (c) {$b_4$};
        \node [right] at (d) {$b_3$};
        \node [left] at (e) {$b_2$};
        \node [right] at (f) {$b_1$};        
        \node [left] at (g) {$b_0$};

	\draw [very thick, ->] (b) -- (a);
	\draw [very thick, ->] (b) -- (c);
	\draw [very thick, ->] (d) -- (c);
	\draw [very thick, ->] (d) -- (e);
	\draw [very thick, ->] (f) -- (g);
	\draw [very thick, ->] (f) -- (e);

\end{tikzpicture}
\caption[]
{Generating complex for $\cfk^\infty$ of the left-handed (4,5)-torus knot $T_{4,5}$} $\cfk^\infty(T_{4,5})$ is generated over $\mathbb{F}[U,U^{-1}]$ by the above chain complex. The arrows, representing terms in the differential, are drawn to scale, with lengths of arrows ranging between one and three. 
\label{fig:T4,5}
\end{figure}

\section{Further Directions} \label{section-future}
One question is the effectiveness of $\nu_n(K)$ when compared to other concordance invariants such as $\Upsilon_K(t)$, coming from $t$-modified knot Floer homology \cite{Upsilon}, or $V_k$, coming from surgery formulas of Ozsv\'ath and Szab\'o \cite{OzSzintegersurgeries}.  

The invariants $\nu_n(K)$ do not define concordance homomorphisms $\mathcal{C} \to \mathbb{Z}$, where $\mathcal{C}$ is the concordance group of knots. This implies that they do not necessarily vanish on knots of finite concordance order. 
The existence of $p$-torsion, $p \neq 2$, in the concordance group $\mathcal{C}$ is an open question. A related conjecture, based on a question of Gordon \cite{gordon}, as phrased in \cite{livsurvey}:
\begin{conjecture}[Gordon]
A knot has order two in $\mathcal{C}$ if and only if $K = -K$ is negative amphichiral.
\end{conjecture}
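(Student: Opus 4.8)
The plan is to treat the two implications of the conjecture separately, since one is classical and the other is the genuine content. I will read the statement as: a knot $K$ has order exactly two in $\mathcal{C}$ if and only if $K$ is concordant to a negative amphichiral knot $J$ (one with $J$ isotopic to its reverse mirror $-J$).

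The \emph{if} direction is standard and I would make it self-contained. Suppose $K$ is concordant to a negative amphichiral knot $J$, so $J$ is isotopic to $-J$. Since $[-J] = -[J]$ in $\mathcal{C}$, the isotopy forces $[J] = -[J]$, hence $2[J] = 0$ and therefore $2[K] = 0$. To exhibit this geometrically rather than formally, I would write down the explicit slice disk for $J \# J \cong J \# (-J)$ obtained from the trace of the isotopy $J \cong -J$ inside $S^3 \times [0,1]$. The class has order \emph{exactly} two precisely when $[K] \neq 0$, which is built into the hypothesis. As a consistency check that ties the conjecture to the present invariants, I note that any such $J$ satisfies $\nu_n(J) = \nu_n(-J) = -\nu_{-n}(J)$ by the mirroring property, so in particular $\tau(J) = 0$ and the whole sequence $\{\nu_n(J)\}$ is antisymmetric in $n$; thus an order-two knot admitting a negative amphichiral representative must have $\nu_n(K) = -\nu_{-n}(K)$ for all $n$.

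The substance is the \emph{only if} direction, and here the input is weak while the desired output is strong: order two gives only that $K \# K$ is slice, equivalently that $K$ is concordant to $-K$, whereas the goal is a single representative carrying an honest orientation-reversing involution of the pair $(S^3, J)$. My strategy would be to factor the problem through Levine's algebraic concordance group $\mathcal{G}$, where order-two classes are completely classified by their Seifert forms, and to first produce a negative amphichiral representative at the algebraic level; the remaining step is to promote an algebraically amphichiral representative to a geometric one in the same concordance class, controlling the Casson--Gordon obstructions in the topological category and, smoothly, the Heegaard Floer obstructions of the kind measured by $\tau$, $\nu^+$, and the $\nu_n$ of this paper (all of which must respect the antisymmetry above).

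The hard part — and the reason this remains a conjecture — is exactly this final promotion step: there is no known mechanism converting the abstract relation $K \sim -K$ into a geometric symmetry of a concordant knot, and the kernel of $\mathcal{C} \to \mathcal{G}$ (detected by Casson--Gordon invariants, and smoothly by invariants such as $\nu_n$) contains $2$-torsion whose amphichiral realizability is unknown. I therefore expect no complete proof; instead, a realistic intermediate target is to verify the obstruction-theoretic half within a computable class. For homologically thin knots of order two, Theorem~\ref{thinknots} forces $\tau = 0$ and hence $\nu_n = 0$ for all $n$, so the Floer-theoretic necessary conditions for a negative amphichiral representative are automatically satisfied; establishing the conjecture on such a class would test the strategy without resolving the general obstruction.
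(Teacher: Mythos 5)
You were asked to compare against the paper's proof, but there is none: this statement appears in Section~5 (Further Directions) as an open conjecture attributed to Gordon, quoted from the literature to motivate a question about torsion in $\mathcal{C}$, and the paper neither proves it nor claims to. Your proposal correctly recognizes this, and the parts you do prove are sound. The \emph{if} direction is the standard argument: reading the statement (as one must, and as you do) up to concordance, if $K$ is concordant to $J$ with $J$ isotopic to its reversed mirror $-J$, then $[K]=[J]=[-J]=-[K]$, so $2[K]=0$, with order exactly two precisely when $K$ is not slice; the slice disk for $J \,\#\, J \cong J \,\#\, (-J)$ swept out by the isotopy is the usual geometric witness. Your consistency checks against the paper's invariants are also correct: since each $\nu_n$ is a concordance invariant and $\nu_n(-K)=-\nu_{-n}(K)$ by the mirroring property (knot Floer invariants being insensitive to string orientation reversal), any order-two knot with a negative amphichiral representative has an antisymmetric sequence $\nu_n(K)=-\nu_{-n}(K)$; and for a thin knot of finite concordance order, $\tau=0$ because $\tau$ is a homomorphism, so the thin-knot computation gives $\nu_n\equiv 0$ for all $n$, meaning these invariants impose no obstruction there.

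The genuine gap is exactly the one you name yourself: the \emph{only if} direction, which is the entire content of the conjecture and remains open. The hypothesis yields only the abstract relation $K \sim -K$ in $\mathcal{C}$, and no known technique converts such a relation into an orientation-reversing symmetry of the pair $(S^3, J)$ for some $J$ in the class. Your proposed factorization through Levine's algebraic concordance group handles only the algebraic shadow; the kernel of $\mathcal{C} \to \mathcal{G}$, detected topologically by Casson--Gordon invariants and smoothly by Heegaard Floer invariants such as the $\nu_n$ of this paper, contains two-torsion candidates for which amphichiral realizability is unknown, and your ``promotion step'' is precisely the missing idea, not a step one can currently fill. So the proposal should not be read as a proof attempt that falls short, but as an accurate assessment matching the paper's own stance: the statement is conjectural, the easy direction and the Floer-theoretic necessary conditions check out, and a verification on a restricted class such as thin knots would be a reasonable (but still unproven) intermediate target. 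One small caution: as literally written, ``order two'' should be ``order dividing two'' unless one adds the hypothesis that $K$ is not slice, a point your exact-order reading quietly absorbs and which you should state explicitly.
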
 
Recently, Hendricks and Manolescu defined involutive Heegaard Floer concordance invariants $\overline{V_0}$ and $\underline{V_0}$, which detects the non-sliceness of the figure eight knot. The non-sliceness of $4_1$ was previously known through classical methods, but this is the first method of detection coming from the Heegaard Floer package. By additivity of $\tau$ and the behavior of $\varepsilon$ under connect sums, $\tau(K)$ and $\nu(K)$ vanish for all knots $K$ of finite concordance order. This leaves open the cases $\nu_n(K)$ for $n > 1$ and $n < -1$. 
We pose the question:
\begin{question}
Does there exist a knot $K$ of finite concordance order such that $\nu_n(K) \neq 0$ for some $n$?
\end{question}

Another question is how the invariants $\nu_n(K)$ behave under connected sum. It is known that $\nu^+(K)$ is subadditive by \cite{BCGnote}. That is, 
\begin{eqnarray*}
\nu^+(K\#L) \leq \nu^+(K) + \nu^+(L)
\end{eqnarray*}
Using mirroring relations and subadditivity of $\nu^+(K)$ shows that $\nu^{+'}(K)$ is superadditive.
\begin{lemma} 
\label{lem-superadditive}
For any two knots $K$ and $L$, 
\begin{eqnarray*}
\nu^{+'}(K\#L) \geq \nu^{+'}(K) + \nu^{+'}(L)
\end{eqnarray*}
\begin{proof} By subadditivity of $\nu^+$ and the mirroring relations,
\begin{eqnarray*}
\nu^+(-K\#-L) &\leq& \nu^+(-K) + \nu^+(-L)
\\
-\nu^{+'}(K\#L) &\leq& -\nu^{+'}(K) + -\nu^{+'}(L)
\\
\nu^{+'}(K\#L) &\geq& \nu^{+'}(K) + \nu^{+'}(L)
\end{eqnarray*}
\end{proof}
\end{lemma}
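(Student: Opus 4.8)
The plan is to deduce superadditivity of $\nu^{+'}$ directly from the already-established subadditivity of $\nu^+$ by passing to mirror images, exactly as the mirroring relation of Lemma~\ref{lemma:nuplus} lets us translate statements about $\nu^{+'}$ into statements about $\nu^+$ of the mirror knot.

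First I would record the two ingredients I intend to combine. The first is the mirroring identity $\nu^{+'}(J) = -\nu^+(-J)$ from Lemma~\ref{lemma:nuplus}, valid for every knot $J$. The second is the subadditivity $\nu^+(K\#L)\le \nu^+(K)+\nu^+(L)$ established in \cite{BCGnote}. The only additional topological input needed is that mirroring commutes with connected sum, i.e. $-(K\#L)=(-K)\#(-L)$; this is immediate, since reflecting a connected sum reflects each summand.

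Next I would apply the mirroring identity to the knot $K\#L$ and use the previous observation to write $\nu^{+'}(K\#L) = -\nu^+(-(K\#L)) = -\nu^+((-K)\#(-L))$. Applying subadditivity of $\nu^+$ to the pair of knots $-K$ and $-L$ gives $\nu^+((-K)\#(-L)) \le \nu^+(-K)+\nu^+(-L)$, and negating both sides reverses the inequality. Finally, rewriting $-\nu^+(-K)=\nu^{+'}(K)$ and $-\nu^+(-L)=\nu^{+'}(L)$ via the mirroring identity in the opposite direction yields the claimed bound $\nu^{+'}(K\#L)\ge \nu^{+'}(K)+\nu^{+'}(L)$.

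There is no real obstacle here: the entire content sits in the two cited facts, and the argument is a short chain of equalities together with a single sign reversal. The only points requiring any care are the direction of the inequality after negation—subadditivity becomes superadditivity precisely because $\nu^{+'}$ is defined through the negated mirror of $\nu^+$—and the verification that mirroring distributes over connected sum, which is standard.
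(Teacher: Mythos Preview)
Your proposal is correct and follows exactly the same approach as the paper: apply subadditivity of $\nu^+$ to the mirrors $-K$ and $-L$, then translate via the mirroring identity $\nu^{+'}(J)=-\nu^+(-J)$ of Lemma~\ref{lemma:nuplus}. You spell out the step $-(K\#L)=(-K)\#(-L)$ a bit more explicitly than the paper does, but otherwise the arguments are identical.
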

As pointed out to the author by Jen Hom, it can also be seen by additivity of $\tau$ and the behavior of $\varepsilon$ under connect sum that $\nu(K)$ is subadditive. A similar argument shows that $\nu'(K)$ is superadditive. This leads us to ask the following two questions. 

\begin{question}
Is $\nu_n(K \# K') \leq \nu_n(K) + \nu_n(K')$ for all positive integers $n \in \mathbb{Z}_+$?
\end{question}
\begin{question}
Is $\nu_n(K \# K') \geq \nu_n(K) + \nu_n(K')$ for all negative integers $n \in \mathbb{Z}_-$?
\end{question}

The next question was posed by Zhongtao Wu:
\begin{question}[Wu]
If $\nu_n(K) = \nu_n(K')$ for all $n \in \mathbb{Z}$, then is $\nu^+(K \# -K')
 = \nu^+(-K \# K') = 0$?
 \end{question}
The condition that $\nu^+(K \# -K') = \nu^+(-K \# K') = 0$ implies that \[\cfk^\infty(K \# -K') \simeq \cfk^\infty(U) \oplus A,\] where $A$ is an acyclic complex \cite{hom-survey}. 

 \bibliographystyle{amsplain}
 \bibliography{references}

\end{document}